\documentclass[a4paper,10pt]{amsart}

\usepackage[english]{babel}
\usepackage[utf8]{inputenc}
\usepackage[T1]{fontenc}
\usepackage{lmodern}

\usepackage{amssymb}
\usepackage{amsmath}
\usepackage{amsthm}
\usepackage{bbm}

\usepackage[shortlabels]{enumitem}
\usepackage{tikz}
\usepackage{marginnote}
\usepackage{mathtools}
\usepackage{orcidlink}

\usepackage{hyperref}

\newcommand{\bbC}{\mathbb{C}}
\newcommand{\bbD}{\mathbb{D}}

\newcommand{\bbN}{\mathbb{N}}

\newcommand{\bbT}{\mathbb{T}}

\newcommand{\bbZ}{\mathbb{Z}}

\newcommand{\calA}{\mathcal{A}}

\newcommand{\calL}{\mathcal{L}}

\newcommand{\calU}{\mathcal{U}}

\newcommand{\N}{\bbN}

\newcommand{\C}{\bbC}

\newcommand{\Z}{\bbZ}

\DeclarePairedDelimiter{\norm}{\lVert}{\rVert}
\DeclarePairedDelimiter{\abs}{\lvert}{\rvert}

\DeclarePairedDelimiter{\set}{\lbrace}{\rbrace}

\DeclareMathOperator{\one}{\mathbbm{1}}

\DeclareMathOperator{\re}{Re}

\renewcommand{\Re}{\re}

\newcommand{\ui}{\mathrm{i}}

\theoremstyle{definition}
\newtheorem{definition}{Definition}[section]
\newtheorem{remark}[definition]{Remark}

\newtheorem{example}[definition]{Example}

\newtheorem{open_problem}[definition]{Open Problem}

\theoremstyle{plain}
\newtheorem{proposition}[definition]{Proposition}
\newtheorem{lemma}[definition]{Lemma}
\newtheorem{theorem}[definition]{Theorem}
\newtheorem{corollary}[definition]{Corollary}

\numberwithin{equation}{section}

\usepackage{etoolbox}

\makeatother

\begin{document}

\title[Cyclicity Conjecture]
{On the Cyclicity Conjecture}

\author{Alexander Dobrick \orcidlink{0000-0002-3308-3581}}
\address[A.~Dobrick]{Alexander Dobrick, Christian-Albrechts-Universit\"at~zu~Kiel (Alumni), Arbeitsbereich~Analysis, 24118 Kiel, Germany}
\email{alexander.dobrick.math@web.de}

\date{7 September 2026}
\subjclass[2020]{Primary 47B65; Secondary 47A10, 46B42}

\begin{abstract}
    We prove that the peripheral spectrum of every positive operator on a complex Banach lattice is cyclic, thereby resolving the cyclicity conjecture without additional growth assumptions. Our proof is based on a spectral domination theorem derived from Vesentini's theorem on the subharmonicity of the spectral radius: when two positive operators have the same spectral radius and one dominates the other, the peripheral spectrum of the smaller operator is contained in that of the larger one. This extends an earlier result of Räbiger and Wolff. After passing to a suitable lattice extension, an averaging procedure based on torsion operators produces a decreasing sequence of positive minorants which retain the spectral radius and become asymptotically rotationally self-similar. An ultrapower argument turns this asymptotic relation into exact rotational self-similarity, allowing us to deduce cyclicity from spectral domination. The same approach also yields an alternative proof of Lotz's cyclicity theorem for Abel solvable operators.
\end{abstract}

\maketitle

\section{Introduction} \label{section:introduction}

Although the cyclicity problem has attracted sustained interest since the 1960s, this central conjecture in the spectral theory of positive operators, known as Perron--Frobenius theory, has remained open to this very day. The problem is the following:

\begin{open_problem}[Cyclicity Conjecture]
    Let $T$ be a positive operator on a complex Banach lattice $E$ with $\mathrm{r}(T)=1$. If $\lambda \in \sigma_{\mathrm{per}}(T)$, does it follow that $\lambda^k \in \sigma(T)$ for each $k \in \Z$?
\end{open_problem}

In finite dimensions, this observation goes back to the very beginning of classical Perron--Frobenius theory at the beginning of the twentieth century: the peripheral spectrum of every positive matrix is cyclic (see \cite{Frobenius1912, Perron1907}). When one passes to positive operators on infinite-dimensional Banach lattices, however, the situation becomes considerably more subtle. A systematic development of this theory took place in T\"ubingen around H.~H.~Schaefer during the 1960s and early 1970s; see in particular \cite{Schaefer1974}. During this period, substantial progress on the cyclicity problem was made by Lotz, Krieger, Scheffold and others (cf.~\cite{Krieger1969,Lotz1968,Scheffold1971}). Various further sufficient conditions were obtained subsequently (cf.~\cite{Glueck2016, Glueck2018, Grobler1995, Zhang1991}) for a remarkably broad collection of positive operators. One of the principal classical results is due independently to Lotz and Krieger (see \cite{Krieger1969, Lotz1968}): the peripheral spectrum is cyclic if the positive operator is Abel bounded. Lotz subsequently obtained cyclicity for the considerably larger class of Abel solvable operators. This class contains, in particular, the familiar case where the spectral radius is a pole of the resolvent and hence also compact positive operators with positive spectral radius. For a more in-depth historical account, we refer to \cite{Glueck2018} or to Bihlmaier's bachelor's thesis \cite{Bihlmaier2022}.

However, all these results seem to share a common theme: they require additional control of the asymptotic behaviour of the operator or of its resolvent, which is particularly striking since the conjecture itself contains no such assumption. There is also a basic conceptual difficulty behind this discrepancy. A peripheral spectral value of a general positive operator need not be an eigenvalue, so there is initially no vector on which the order structure can be exploited directly. Even if a peripheral eigenvector is available, positivity acts naturally only on its modulus. Thus, the problem requires a mechanism which converts order information into information about rotations of the complex spectrum without imposing additional growth assumptions.

In this article, we provide such a mechanism and obtain a positive answer to the cyclicity problem in full generality.

\begin{theorem} \label{theorem:cyclicity}
    Let $T$ be a positive operator on a complex Banach lattice $E$ such that $\mathrm{r}(T)=1$. Then $\lambda^k \in \sigma(T)$ for all $\lambda \in \sigma_{\mathrm{per}}(T)$ and $k \in \Z$.
\end{theorem}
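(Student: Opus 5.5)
The plan follows the strategy outlined in the abstract. It combines a spectral domination principle with an averaging construction that produces rotationally self-similar minorants.

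\textbf{Step 1: spectral domination.} First I prove that if $0 \le S \le T$ on a complex Banach lattice with $\mathrm{r}(S) = \mathrm{r}(T) = 1$, then $\sigma_{\mathrm{per}}(S) \subseteq \sigma_{\mathrm{per}}(T)$. For $\mu \in \bbT \setminus \sigma(T)$ I would consider the holomorphic family
\[
z \mapsto R(\mu,T)\bigl(T - z(T-S)\bigr).
\]
Its value at $z=1$ is $R(\mu,T)S$. Vesentini's theorem says that $z \mapsto \log \mathrm{r}(\cdot)$ is subharmonic along this family. Positivity gives the domination $\abs{R(\mu,T)\,\cdot} \le R(1,T)\abs{\cdot}$ wherever this makes sense. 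Combining the maximum principle with these positivity bounds should show that $1 - \mu^{-1}S$ stays invertible, hence $\mu \notin \sigma(S)$. I expect this step to require the most care in adapting R\"abiger--Wolff, but it is not the core difficulty.

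\textbf{Step 2: reduction and averaging.} Fix $\lambda \in \sigma_{\mathrm{per}}(T)$ and write $\lambda = e^{\ui\theta}$. I pass to a lattice extension, for instance an ultrapower $E^\calU$ or an $\ell^\infty$-type extension. There $\lambda$ becomes an approximate eigenvalue, and torsion (multiplication-type) operators $U$ with $\abs{U} = 1$ are available. These satisfy $UTU^{-1}$-type relations relating $T$ to $\lambda T$ on suitable vectors.

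Next I average over rotations:
\[
T_n = \frac1n \sum_{j=0}^{n-1} \bigl( \lambda^{-j} U^{j} T U^{-j} \bigr)^{\wedge},
\]
where the hat denotes taking a positive minorant, i.e.\ an infimum with $T$ in the lattice of regular operators. This should give a decreasing sequence $0 \le T_{n+1} \le T_n \le T$. Each $T_n$ must keep spectral radius $1$. Here I use the approximate eigenvector $x$ with $\abs{T x} \approx \abs{x}$ and check that the moduli are preserved. The aim is that $U T_n U^{-1}$ and $\lambda T_n$ differ by a term tending to $0$ in norm, so that the $T_n$ become asymptotically rotationally self-similar.

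\textbf{Step 3: ultrapower and conclusion.} Passing to a further ultrapower along $n$ yields a positive operator $S$ with $0 \le S \le \widehat T$, where $\widehat T$ is the lift of $T$. It satisfies $\mathrm{r}(S) = 1$ and the exact relation $USU^{-1} = \lambda S$ for an invertible isometry $U$. Hence $\sigma(S) = \lambda\sigma(S)$, and $\lambda^k \in \sigma_{\mathrm{per}}(S)$ for all $k \in \Z$. Spectral domination (Step 1) then gives $\lambda^k \in \sigma(\widehat T)$. Spectra are preserved under ultrapowers and under the extension, so $\lambda^k \in \sigma(T)$.

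\textbf{Main obstacle.} The hardest part will be Step 2: keeping $\mathrm{r}(T_n) = 1$ uniformly while the minorants decrease. Lower bounds on spectral radius do not pass to limits in general. I would first try to carry a common approximate eigenvector $x_n$ with $\abs{x_n}$ invariant under the averaging, and obtain $\mathrm{r}(S) \ge 1$ in the limit from $\norm{S^m \abs{x}} \ge c$.
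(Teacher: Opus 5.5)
The overall architecture is right, and your Step~3 matches the paper's ultrapower step. But both load-bearing steps lack their key mechanism, and the ones you propose would not work.

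In Step~1, the family $z\mapsto R(\mu,T)(T-z(T-S))$ has no a priori bound on its spectral radius, so the maximum principle has nothing to act on. The bound $\abs{R(\mu,T)x}\le R(1,T)\abs{x}$ is meaningless: $1\in\sigma(T)$ for positive $T$ with $\mathrm{r}(T)=1$, and for $\abs{\mu}=1$ the resolvent has no Neumann series. The family that works is $F(\mu)=S+\mu(T-S)$ with $\mu\in\bbD$. Positivity of $S$ and $T-S$ gives $\abs{F(\mu)x}\le T\abs{x}$, hence $\mathrm{r}(F(\mu))\le1$. Apply Vesentini's theorem and the maximum principle to $\frac12(I+\overline{\nu}F(\mu))$ for $\nu\in\sigma(F(\mu_0))\cap\bbT$. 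This shows that $\sigma(F(\mu))\cap\bbT$ does not depend on $\mu\in\bbD$, and letting $\mu\uparrow1$ gives $\sigma(S)\cap\bbT\subseteq\sigma(T)$. This argument yields the equality $\sigma(S+\mu(T-S))\cap\bbT=\sigma(S)\cap\bbT$ with no assumption on $\mathrm{r}(S)$. Your formulation presupposes $\mathrm{r}(S)=\mathrm{r}(T)$, which is too weak for what follows.

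In Step~2, you need a genuine eigenvector, not an approximate one. Since $\lambda\in\partial\sigma(T)$, it is an eigenvalue of $T^\calU$. After passing to the Dedekind complete bidual $(E^\calU)''$, the signum construction gives a torsion operator $U$ with $U\abs{z}=z$. The only relation this provides is $\overline\lambda U^{-1}TU\abs{z}=\abs{z}$, at a single vector.

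Your averages are not well defined: the conjugates $\lambda^{-j}U^jTU^{-j}$ are in general not real operators, so an infimum with $T$ makes no sense. Moreover, any positive truncation that does not commute with conjugation by $U$ destroys the telescoping you need for asymptotic self-similarity. The paper instead iterates $\Phi(Q)=\frac14\bigl(2Q+\overline\lambda U^{-1}QU+\lambda UQU^{-1}\bigr)$, which satisfies $0\le\Phi(Q)\le Q$ for $Q\ge0$. The binomial expansion $S_n=\Phi^n(T)=\sum_{k=-n}^{n}4^{-n}\binom{2n}{n+k}\overline\lambda^{k}U^{-k}TU^{k}$ then gives $\norm{S_nU-\lambda US_n}\le2\cdot4^{-n}\binom{2n}{n}\norm{T}\to0$.

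Above all, the obstacle you identify is left unresolved, and your proposed fix cannot work: $\abs{z}$ bears no relation to $\overline\lambda^{k}U^{-k}TU^{k}$ for $\abs{k}\ge2$, so no common vector controls $\mathrm{r}(S_n)$. The missing idea is a second use of domination:
\begin{enumerate}[(a)]
\item $S_1\abs{z}=\frac12(T\abs{z}+\abs{z})\ge\abs{z}$ gives $\mathrm{r}(S_1)=1$.
\item $T-S_n=\sum_{j<n}\Phi^j(T-S_1)\le n(T-S_1)$ yields $S_1\le(1-\frac1n)T+\frac1nS_n\le T$, so $1\in\sigma\bigl(S_n+(1-\frac1n)(T-S_n)\bigr)$.
\item The equality form of the domination theorem with $\mu=1-\frac1n$ then puts $1$ into $\sigma(S_n)$.
\end{enumerate}
Only with $\mathrm{r}(S_n)=1$ for every $n$ does your Step~3 produce $\norm{S^k}=\lim_{n\to\calU}\norm{S_n^k}\ge1$.
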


Our proof is based on positive domination. The first main result of the article shows that peripheral spectral values pass from a positive operator $S$ to a positive operator $T$ under domination. In particular, if $0 \leq S \leq T$ and $\mathrm{r}(S)=\mathrm{r}(T)$, then
\begin{align*}
    \sigma(S)\cap \mathrm{r}(S)\bbT \subseteq \sigma(T)\cap \mathrm{r}(T)\bbT.
\end{align*}
The proof of this result is surprisingly short and relies on a classical theorem from abstract spectral theory. The inheritance of spectral properties under positive domination was studied systematically by R\"abiger and Wolff in \cite{Rabiger1997} and, as a side effect, we revisit one of their main results.

The second part of our argument exploits this first observation by constructing a sequence of carefully chosen minorants which become increasingly rotationally self-similar. Finally, an ultrapower replaces approximate rotational self-similarity by an exact relation that allows us to apply our spectral domination theorem.

\subsection*{Notation}
We write $\bbD = \{z \in \C : \abs z < 1\}$ and $\bbT = \{z \in \C : \abs z = 1\}$. For a complex Banach space $E$, we denote by $\calL(E)$ the Banach algebra of bounded linear operators on $E$ and by $I$ the identity operator. For $T \in \calL(E)$, we write $\sigma(T)$ for its spectrum, $\mathrm{r}(T)$ for its spectral radius, $\sigma_p(T)$ for its point spectrum and $\sigma_{\mathrm{app}}(T)$ for its approximate point spectrum. Its peripheral spectrum is $\sigma_{\mathrm{per}}(T) = \sigma(T) \cap \mathrm{r}(T)\bbT$, and $R(\lambda,T) = (\lambda I-T)^{-1}$ denotes its resolvent for $\lambda \notin \sigma(T)$.

For a complex Banach lattice $E$, we denote its positive cone by $E_+$ and the modulus of $x \in E$ by $\abs x$. The relation $0 \leq S \leq T$ means that both $S$ and $T-S$ are positive operators. We write $x \perp y$ if $\abs x \wedge \abs y = 0$.

\subsection*{Organisation of the article} In Section~\ref{section:peripheral-spectral-inheritance-under-positive-domination}, we prove our main spectral domination result. In Section~\ref{section:critical-minorants-and-cyclicity}, we prove the main result of the article by making use of a domination procedure. In Appendix~\ref{section:torsion-operators}, we give a brief exposition of torsion operators that play a prominent role in the proof of the main theorem. In Appendix~\ref{section:g-solvable-case}, we give an alternative proof of Lotz' weaker theorem that makes use of our technique.

\subsection*{AI disclosure statement}

During the preparation of this work, the author used an OpenCode harness, custom-written auxiliary skills and large language models from OpenAI's GPT-5.6 family for editorial assistance and to simplify some arguments. The statements of Proposition~\ref{proposition:vesentini} and Proposition~\ref{proposition:peripheral-rigidity} were found by an AI-assisted literature review. In an earlier version of the paper, the notion of rotational self-similarity had a different AI-inspired name. The author reviewed and verified all AI-assisted material and takes full responsibility for the content of the manuscript. All new contributions are due to the author.

\section{Peripheral spectral inheritance under positive domination} \label{section:peripheral-spectral-inheritance-under-positive-domination}

That positive operators inherit properties by means of domination is quite a common and rather classical theme in the theory of Banach lattices and is used extensively in many applications to partial differential equations.

We start this section by recalling a theorem of Vesentini (see \cite{Vesentini1968}; see also \cite[Theorem~1']{Vesentini1970}) that does not seem to have received much attention within abstract spectral theory. The only book reference the author could find is \cite[Theorem~3.4.7]{Aupetit2012}.

\begin{proposition}[Vesentini] \label{proposition:vesentini}
    Let $\calA$ be a complex Banach algebra. Let $\Omega\subseteq\C$ be a connected open set and let $F \colon \Omega \to \calA$ be holomorphic. Then the function
    \begin{align*}
        R \colon \Omega \to [-\infty, \infty), \quad R(z) \coloneqq \log(\mathrm{r}(F(z)))
    \end{align*}
    is subharmonic, where we use the convention $\log 0 = -\infty$.
\end{proposition}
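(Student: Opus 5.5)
The plan is the classical route via the spectral radius formula and upper semicontinuity: exhibit $\log \mathrm{r}(F(z))$ as a decreasing limit of subharmonic functions, and check the sub-mean value inequality on circles separately. By Gelfand's formula,
\begin{align*}
    \log \mathrm{r}(F(z)) = \lim_{n\to\infty} \tfrac{1}{2^n}\log\norm{F(z)^{2^n}} = \inf_{n} \tfrac{1}{2^n}\log\norm{F(z)^{2^n}}.
\end{align*}
The sequence indexed by $2^n$ is monotonically decreasing because of submultiplicativity: $\norm{a^{2^{n+1}}}\le \norm{a^{2^n}}^2$. A decreasing limit of subharmonic functions is subharmonic, possibly identically $-\infty$, which is allowed by the convention $\log 0=-\infty$. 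It therefore suffices to show that $u_n(z)\coloneqq 2^{-n}\log\norm{F(z)^{2^n}}$ is subharmonic for each $n$.

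Since $z\mapsto F(z)^{m}$ is holomorphic, the claim reduces to the following: for a holomorphic $G\colon\Omega\to\calA$, the function $\log\norm{G}$ is subharmonic. Upper semicontinuity is clear, since $\norm{G}$ is continuous. For the sub-mean value property, use Hahn--Banach. We have $\norm{G(z)}=\sup_{\phi}\abs{\phi(G(z))}$ over the unit ball of $\calA'$, and each $\phi\circ G$ is a scalar holomorphic function, so $\log\abs{\phi\circ G}$ is subharmonic. Given a closed disc $\overline{D}(z_0,r)\subseteq\Omega$, pick $\phi$ norming $G(z_0)$. Then
\begin{align*}
    \log\norm{G(z_0)} = \log\abs{\phi(G(z_0))} \le \frac{1}{2\pi}\int_0^{2\pi}\log\abs{\phi(G(z_0+re^{\ui t}))}\,dt \le \frac{1}{2\pi}\int_0^{2\pi}\log\norm{G(z_0+re^{\ui t})}\,dt.
\end{align*}
This is the sub-mean value inequality, so $\log\norm{G}$ is subharmonic.

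The main delicate point is the limit step. First, $\inf_n u_n$ equals the limit along the subsequence $2^n$, so Gelfand's formula still applies. Second, the pointwise decreasing limit of subharmonic functions is subharmonic. This holds because upper semicontinuity is preserved under infima, and the sub-mean value inequality passes to the limit by the monotone convergence theorem applied to the decreasing sequence $u_n$, which is bounded above on compact circles. The possibility that some $u_n$ is $-\infty$ at points, for instance where $F(z)^{2^n}=0$, is harmless under the convention. Connectedness of $\Omega$ only matters if one wants to exclude the degenerate case $R\equiv-\infty$ in the usual definition; I would simply note that we allow it, as the statement's convention suggests.
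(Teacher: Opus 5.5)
Your proof is correct and is essentially the standard argument. The paper gives no proof of its own here and only cites Vesentini and Aupetit's Theorem~3.4.7. Your route matches the one in that reference: write $\log \mathrm{r}(F(z))$ as the decreasing limit of $2^{-n}\log\norm{F(z)^{2^n}}$, show that each term is subharmonic using a norming functional from Hahn--Banach, and pass to the monotone limit. You proved the sub-mean inequality on every closed disc in $\Omega$, so the same radius works for all $n$, and your treatment of the $-\infty$ values and the degenerate case is sound.
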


The following proposition is an immediate consequence of Vesentini's theorem (see also \cite[Theorem~3.4.11]{Aupetit2012} for a slightly different formulation of the proof).

\begin{proposition} \label{proposition:peripheral-rigidity}
    Let $\calA$ be a complex unital Banach algebra. Let $\Omega\subseteq\C$ be a connected open set and let $F \colon \Omega \to \calA$ be holomorphic. Suppose that $\sigma(F(z)) \subseteq \overline{\bbD}$ for all $z \in \Omega$. Then $\sigma(F(z))\cap \bbT$ is independent of the choice of $z \in \Omega$.
\end{proposition}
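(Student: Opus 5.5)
Fix $\lambda \in \bbT$. We show that the set $U_\lambda \coloneqq \{z \in \Omega : \lambda \notin \sigma(F(z))\}$ is either empty or all of $\Omega$. Since $\Omega$ is connected, it suffices to show that $U_\lambda$ is open and closed in $\Omega$. Openness is standard: the set of invertible elements of $\calA$ is open, and $z \mapsto \lambda - F(z)$ is continuous. The real work is closedness, i.e.\ that the set $\{z : \lambda \in \sigma(F(z))\}$ is open. For this we apply Proposition~\ref{proposition:vesentini} not to $F$ itself but to a resolvent-type holomorphic function.

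Fix $z_0 \in \Omega$ with $\lambda \in \sigma(F(z_0))$. Suppose, for contradiction, that there are points $z$ arbitrarily close to $z_0$ with $\lambda \notin \sigma(F(z))$. Consider, for $0<t<1$, the function
\begin{align*}
    G_t(z) \coloneqq (1-t)\, R\bigl(\lambda/(1-t)^{-1}\cdots\bigr),
\end{align*}
or more cleanly $G_t(z) \coloneqq t\,R(\mu_t, F(z))$ with $\mu_t \coloneqq (1+t)\lambda$. Since $\sigma(F(z)) \subseteq \overline{\bbD}$ and $\abs{\mu_t} > 1$, the map $G_t$ is holomorphic on all of $\Omega$. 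By the spectral mapping theorem, $\sigma(G_t(z)) = \{ t/(\mu_t - w) : w \in \sigma(F(z))\}$, and since $\abs{\mu_t - w} \geq t$ for $w \in \overline{\bbD}$, with equality exactly when $w = \lambda$, we get $\mathrm{r}(G_t(z)) \leq 1$, with equality if and only if $\lambda \in \sigma(F(z))$. If $\lambda \notin \sigma(F(z_1))$, then compactness gives $\mathrm{r}(G_t(z_1)) \leq c < 1$ uniformly for small $t$, indeed $\mathrm{r}(G_t(z_1)) \to 0$ as $t \to 0$, since $\operatorname{dist}(\lambda,\sigma(F(z_1)))>0$.

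Now use subharmonicity. The function $u_t \coloneqq \log \mathrm{r}(G_t(\cdot))$ is subharmonic on $\Omega$ by Proposition~\ref{proposition:vesentini}, satisfies $u_t \leq 0$, and $u_t(z_0) = 0$. Since $\Omega$ is connected, the maximum principle for subharmonic functions forces $u_t \equiv 0$ on $\Omega$. (Alternatively, apply the sub-mean value inequality on a small disc around $z_0$: $0 = u_t(z_0) \leq$ the average of $u_t \leq 0$ over the circle, so $u_t = 0$ almost everywhere nearby, and hence everywhere by upper semicontinuity together with the global maximum principle.) In particular $\mathrm{r}(G_t(z)) = 1$ for every $z \in \Omega$, so $\lambda \in \sigma(F(z))$ for every $z$. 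Hence once $\lambda$ lies in one spectrum it lies in all of them, which already proves the claim directly, without the open/closed decomposition.

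The main obstacle is the exact characterisation in the second paragraph: we need $\abs{\mu_t - w} = t$ with $w\in\overline{\bbD}$ only at $w=\lambda$. This holds because the closed disc of radius $t$ around $(1+t)\lambda$ touches $\overline{\bbD}$ only at $\lambda$. It is also important to use the maximum principle rather than a continuity argument, since the spectral radius is merely upper semicontinuous. A single fixed $t$, say $t=1$, already suffices.
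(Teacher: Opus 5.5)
Your proof is correct and follows essentially the paper's route: apply Proposition~\ref{proposition:vesentini} to $F$ composed with a scalar map that sends $\overline{\bbD}$ into $\overline{\bbD}$ and meets $\bbT$ only at the image of the fixed unimodular point, then use the maximum principle. The only difference is that the paper uses the affine map $G(z)=\frac{1}{2}(1+\overline{\lambda}F(z))$ instead of your resolvent $t\,R((1+t)\lambda,F(z))$, which avoids needing holomorphy of inversion and the resolvent form of the spectral mapping theorem; the open/closed setup, the unused contradiction hypothesis and the aborted first formula for $G_t$ are superfluous and should be deleted.
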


\begin{proof}
    Fix $z_0 \in \Omega$ and $\mu \in \sigma(F(z_0)) \cap \bbT$ and consider the function
    \begin{align*}
        G \colon \Omega \to \calA, \qquad G(z) \coloneqq \frac{1 + \overline{\mu} F(z)}{2}.
    \end{align*}
    Then the spectral mapping theorem (see \cite[Theorem~10.28(b)]{Rudin1991}) yields
    \begin{align*}
        \sigma(G(z)) = \set[\bigg]{\frac{1+\overline{\mu}\lambda}{2} : \lambda \in \sigma(F(z))} \subseteq \overline{\bbD}.
    \end{align*}
    Thus, $\mathrm{r}(G(z)) \leq 1$ for all $z \in \Omega$ and $\mathrm{r}(G(z_0))=1$. As $G$ is holomorphic, Proposition~\ref{proposition:vesentini} implies that the function
    \begin{align*}
        R \colon \Omega \to [-\infty, \infty), \quad R(z) \coloneqq \log(\mathrm{r}(G(z)))
    \end{align*}
    is subharmonic. Moreover, $R(z) \leq 0$ for all $z \in \Omega$ and $R(z_0) = 0$. Therefore, it follows from the maximum principle for subharmonic functions (see \cite[Theorem~2.3.1(i)]{Ransford1995}) that $R(z) = 0$ and thus $\mathrm{r}(G(z)) = 1$ for all $z \in \Omega$. As $\sigma(G(z))$ is compact, there exists $\lambda \in \sigma(F(z))$ such that $\abs{1 + \overline \mu \lambda} = 2$. Since $\abs \lambda \leq 1$, the equality in the triangle inequality implies $\abs \lambda = 1$ and $\lambda = \mu$. Thus, $\mu \in \sigma(F(z)) \cap \bbT$ for all $z \in \Omega$, which shows the claim.
\end{proof}

We now bridge the gap from abstract spectral theory to a result on positive domination. Given the prior proposition, the proof is surprisingly straightforward, which should be taken as a sign of the strength of Vesentini's result.

\begin{theorem} \label{theorem:peripheral-domination}
    Let $S$ and $T$ be positive operators on a complex Banach lattice $E$ such that $0 \leq S \leq T$ and $\mathrm{r}(T) = 1$. Then one has
    \begin{align*}
        \sigma(S + \mu (T - S)) \cap \bbT = \sigma(S) \cap \bbT \subseteq \sigma(T) \cap \bbT
    \end{align*}
    for all $\mu \in \bbD$.
\end{theorem}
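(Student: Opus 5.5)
The plan is to apply Proposition~\ref{proposition:peripheral-rigidity} to the affine, hence holomorphic, family
\begin{align*}
    F \colon \bbD \to \calL(E), \qquad F(\mu) \coloneqq S + \mu (T - S).
\end{align*}
Here $\bbD$ is connected and open, and $\calL(E)$ is a complex unital Banach algebra. The first step is to check the hypothesis $\sigma(F(\mu)) \subseteq \overline{\bbD}$ for every $\mu \in \bbD$. We use the standard domination estimate on Banach lattices: if $\abs{Rx} \leq P\abs{x}$ for all $x \in E$ with $P \geq 0$, then $\abs{R^n x} \leq P^n \abs x$. Hence $\norm{R^n} \leq \norm{P^n}$, and Gelfand's formula gives $\mathrm{r}(R) \leq \mathrm{r}(P)$.

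For $R = F(\mu)$ we have
\begin{align*}
    \abs{F(\mu)x} \leq S\abs x + \abs\mu (T-S)\abs x \leq T\abs x,
\end{align*}
because $S \geq 0$, $T-S \geq 0$ and $\abs \mu < 1$. Therefore $\mathrm{r}(F(\mu)) \leq \mathrm{r}(T) = 1$ for all $\mu \in \bbD$. The case $\mu = 0$ gives $\mathrm{r}(S) \leq 1$. The only slightly delicate point in this step is the inequality $\abs{Rx} \leq R\abs x$ for positive operators on complex Banach lattices, together with the corresponding modulus estimate for the sum. Both are standard facts.

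Proposition~\ref{proposition:peripheral-rigidity} now says that $\sigma(F(\mu)) \cap \bbT$ does not depend on $\mu \in \bbD$. Evaluating at $\mu = 0$ gives the equality $\sigma(S + \mu(T-S)) \cap \bbT = \sigma(S) \cap \bbT$.

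For the inclusion into $\sigma(T) \cap \bbT$ we must reach the boundary point $\mu = 1$, which is the main obstacle because $1 \notin \bbD$. I would use upper semicontinuity of the spectrum. Fix $\lambda \in \sigma(S) \cap \bbT$ and suppose $\lambda \notin \sigma(T)$. Since the invertible elements of $\calL(E)$ form an open set, $\lambda I - F(\mu)$ is invertible for $\mu$ close to $1$, as $F(\mu) \to T$ in norm. Choosing such a $\mu \in \bbD$, for instance real $\mu \uparrow 1$, gives $\lambda \notin \sigma(F(\mu))$. This contradicts the $\mu$-independence just established, so $\lambda \in \sigma(T) \cap \bbT$.

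Alternatively, one could apply Proposition~\ref{proposition:peripheral-rigidity} on a slightly larger disc and show that the spectral bound persists there, but the semicontinuity argument avoids this.
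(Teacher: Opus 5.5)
Your proof is correct and follows the paper's argument essentially verbatim. It uses the same affine family $F(\mu)=S+\mu(T-S)$ on $\bbD$, the same domination $\abs{F(\mu)^n x}\leq T^n\abs x$ with Gelfand's formula to verify the hypothesis of Proposition~\ref{proposition:peripheral-rigidity}, and the same openness of the invertibles as $F(t)\to T$ to reach $\mu=1$. Your closing aside about a larger disc would not work: already for $S=0$ one has $\mathrm{r}(F(\mu))=\abs{\mu}>1$ off $\overline{\bbD}$, and the inclusion $\sigma(S)\cap\bbT\subseteq\sigma(T)\cap\bbT$ can be strict, so the semicontinuity argument is genuinely needed.
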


\begin{proof}
    Consider the holomorphic function
    \begin{align*}
        F \colon \bbD \to \calL(E), \quad F(\mu) \coloneqq S + \mu (T - S).
    \end{align*}
    Then one has
    \begin{align*}
        \abs{F(\mu) x} \leq (S + \abs \mu (T - S)) \abs x \leq T\abs x
    \end{align*}
    for all $\mu \in \bbD$ and $x \in E$. Iterating the inequality yields $\abs{F(\mu)^n x} \leq T^n \abs{x}$ for all $n \in \N$. Thus, $\norm{F(\mu)^n} \leq \norm{T^n}$ for all $n \in \N$, which implies $\mathrm{r}(F(\mu)) \leq \mathrm{r}(T) = 1$ by Gelfand's formula. Now it follows from Proposition~\ref{proposition:peripheral-rigidity} that
    \begin{align*}
        \sigma(S + \mu (T - S)) \cap \bbT = \sigma(S) \cap \bbT
    \end{align*}
    for all $\mu \in \bbD$. To prove the inclusion, fix $\lambda \in \sigma(S) \cap \bbT$. Then $\lambda I - F(t)$ is not invertible for any $0 \leq t < 1$. Since $F(t) \to T$ in operator norm as $t \to 1$ and the set of invertible operators is open, $\lambda I - T$ is not invertible either. Thus, $\lambda \in \sigma(T) \cap \bbT$.
\end{proof}

\begin{remark}
    Note that Theorem~\ref{theorem:peripheral-domination} uses positivity of both the operators $S$ and $T - S$. Mere modulus domination cannot replace this hypothesis: $S = -I$ and $T = I$ satisfy $\abs{Sx} \leq T \abs x$, but $-1 \in \sigma(S)$ and $-1 \notin \sigma(T)$.
\end{remark}

As a side effect of the theorem above, we immediately obtain a stronger version of a well-known theorem due to Räbiger and Wolff (see \cite[Theorem~1.4~and~Corollary~1.5]{Rabiger1997}).

\begin{corollary} \label{corollary:raebiger-wolff}
    Let $S$ and $T$ be positive operators on a complex Banach lattice $E$ such that $0 \leq S \leq T$ and $\mathrm{r}(S) = \mathrm{r}(T)$. Then one has the inclusion
    \begin{align*}
        \sigma(S) \cap \mathrm{r}(S)\bbT \subseteq \sigma(T) \cap \mathrm{r}(T) \bbT.
    \end{align*}
\end{corollary}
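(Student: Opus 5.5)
The corollary follows from Theorem~\ref{theorem:peripheral-domination} by rescaling, with one degenerate case handled separately. Set $r \coloneqq \mathrm{r}(S) = \mathrm{r}(T)$.

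\emph{Case $r > 0$.} Consider the rescaled operators $\tilde S \coloneqq S/r$ and $\tilde T \coloneqq T/r$. Positivity and the ordering are preserved under multiplication by the positive scalar $1/r$, so $0 \leq \tilde S \leq \tilde T$. Moreover, $\mathrm{r}(\tilde T) = 1$. Theorem~\ref{theorem:peripheral-domination} therefore gives
\begin{align*}
    \sigma(\tilde S)\cap\bbT \subseteq \sigma(\tilde T)\cap\bbT.
\end{align*}
Since $\sigma(cA) = c\,\sigma(A)$ for every scalar $c$, we have $\sigma(\tilde S) = r^{-1}\sigma(S)$ and $\sigma(\tilde T) = r^{-1}\sigma(T)$. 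Multiplying the inclusion by $r$ yields
\begin{align*}
    \sigma(S)\cap r\bbT \subseteq \sigma(T)\cap r\bbT,
\end{align*}
which is the claim.

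\emph{Case $r = 0$.} Here $r\bbT = \set{0}$, so we must show that $0 \in \sigma(S)$ implies $0 \in \sigma(T)$. In fact $0 \in \sigma(T)$ holds unconditionally. The spectrum of a bounded operator on a complex Banach space is nonempty, and since $\mathrm{r}(T) = 0$ it is contained in $\set{0}$. Hence $\sigma(T) = \set{0}$, and the inclusion holds trivially. (The same argument applies to $S$, so both sides equal $\set{0}$.)

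There is no real obstacle. The only point needing care is the degenerate case $r=0$: the normalisation $\mathrm{r}(T)=1$ in Theorem~\ref{theorem:peripheral-domination} cannot be reached by scaling there, so it is settled directly by nonemptiness of the spectrum. Note also that the hypothesis $\mathrm{r}(S) = \mathrm{r}(T)$ matters only because the corollary is phrased with the circle $\mathrm{r}(S)\bbT$ on the left. Theorem~\ref{theorem:peripheral-domination} does not itself require equality of the spectral radii.
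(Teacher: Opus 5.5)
Your proposal is correct and matches the paper's proof. Both rescale by $\mathrm{r}(T)$ when it is positive, apply Theorem~\ref{theorem:peripheral-domination}, and transfer back via $\sigma(cA) = c\,\sigma(A)$; your nonemptiness-of-the-spectrum argument for $\mathrm{r}(T)=0$ just spells out the case the paper calls immediate.
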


\begin{proof}
   If $\mathrm{r}(T) = 0$, the assertion is immediate. Otherwise, apply Theorem~\ref{theorem:peripheral-domination} to $S/\mathrm{r}(T)$ and $T/\mathrm{r}(T)$ and use the spectral mapping theorem.
\end{proof}

\begin{remark} \label{remark:rabiger-wolff}
    Räbiger and Wolff showed the above corollary under the additional assumption that $T$ is \emph{$(G)$-solvable} (see \cite[Corollary~1.5]{Rabiger1997}), i.e., there exist finitely many closed $T$-invariant ideals $\set{0} = I_0 \subseteq I_1 \subseteq I_2 \subseteq \ldots \subseteq I_n = E$, $n \in \N$, such that the operators $T_k$ induced by $T$ on $I_k/I_{k - 1}$, $1 \leq k \leq n$, satisfy the \emph{growth condition $(G)$}, given by
    \begin{align*}
        \limsup_{\lambda \downarrow \mathrm{r}(T_k)} \norm{(\lambda - \mathrm{r}(T_k)) R(\lambda, T_k)} < \infty.
    \end{align*}
    Depending on the specific reference, the notion of $(G)$-solvability is also referred to as \emph{Abel solvability}, whereas the growth condition $(G)$ is referred to as \emph{Abel boundedness}.

    What is rather interesting in the context of the present paper is that the cyclicity conjecture was also known to be true for $(G)$-solvable operators (see \cite[Theorem~4.9]{Lotz1968}) by a classical theorem of Lotz. In Appendix~\ref{section:g-solvable-case}, we will show that Lotz' theorem is indeed a consequence of the theorem of Räbiger and Wolff.
\end{remark}

\section{Critical minorants and cyclicity} \label{section:critical-minorants-and-cyclicity}

In this section, our aim is to construct a situation where we can apply Theorem~\ref{theorem:peripheral-domination} to prove the cyclicity conjecture for positive operators. For that purpose, we make use of a torsion operator constructed in Appendix~\ref{section:torsion-operators} and the concept of asymptotic rotational self-similarity. Using an ultrapower argument, we can improve this asymptotic rotational self-similarity to actual rotational self-similarity, which allows us to use the domination result. To that end, we start in a simpler setting that shows how the proof would work if the existence of rotationally self-similar minorants were already given. Recall that an operator $U \in \calL(E)$ on a complex Banach lattice $E$ is called a \emph{torsion operator} if $\abs{U x} = \abs x$ for each $x \in E$.

Let $T$ be a positive operator on a complex Banach lattice $E$ with $\mathrm{r}(T) = 1$. We call a positive operator $S \leq T$ a \emph{critical minorant} of $T$ if $\mathrm{r}(S) = 1$. A critical minorant $S$ of $T$ is called \emph{rotationally self-similar} to $T$ with respect to some $\lambda \in \bbT$ if there exists a torsion operator $U$ on $E$ such that
\begin{align*}
    S U = \lambda U S.
\end{align*}

For the definition and basic facts about torsion operators, we refer the reader to Appendix~\ref{section:torsion-operators}. It is not hard to see that if $S$ is rotationally self-similar with respect to some $\lambda \in \bbT$, this already implies cyclicity along the orbit of $\lambda$ as a consequence of Theorem~\ref{theorem:peripheral-domination}. Indeed, one has the following proposition.

\begin{proposition} \label{proposition:rotational-self-similarity-implies-cyclicity}
    Let $T$ be a positive operator on a complex Banach lattice $E$ with $\mathrm{r}(T) = 1$. Suppose that $T$ admits a minorant that is rotationally self-similar to it with respect to some $\lambda \in \bbT$. Then one has $\lambda^k \in \sigma(T)$ for all $k \in \Z$.
\end{proposition}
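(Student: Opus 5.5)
The plan is to show that the spectrum of the critical minorant $S$ is invariant under rotation by $\lambda$. We then use that $1\in\sigma(S)$ to place the whole orbit $\{\lambda^k : k\in\Z\}$ in $\sigma(S)\cap\bbT$. Finally, Theorem~\ref{theorem:peripheral-domination} transfers this orbit to $\sigma(T)$.

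\textbf{Step 1: $S$ is similar to $\lambda S$.} Let $S$ be the rotationally self-similar critical minorant and $U$ a torsion operator with $SU=\lambda US$. From the basic facts on torsion operators in Appendix~\ref{section:torsion-operators}, I will use that $U$ is invertible. Heuristically, $U$ lies in the centre of $E$ and acts like multiplication by a unimodular function, with inverse given by the ``conjugate'' multiplication, which is again a torsion operator. The relation then rearranges to
\begin{align*}
    U^{-1} S U = \lambda S .
\end{align*}
Similar operators have equal spectra, so $\sigma(S)=\sigma(\lambda S)=\lambda\,\sigma(S)$. Multiplying by $\overline{\lambda}=\lambda^{-1}$ also gives $\overline{\lambda}\,\sigma(S)=\sigma(S)$. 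By induction, $\lambda^k\sigma(S)=\sigma(S)$ for all $k\in\Z$.

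\textbf{Step 2: the orbit lies in the peripheral spectrum of $S$.} Since $S$ is positive and $\mathrm{r}(S)=1$, the classical Perron--Frobenius fact that the spectral radius of a positive operator on a Banach lattice belongs to its spectrum gives $1\in\sigma(S)$. Step~1 then yields $\lambda^k=\lambda^k\cdot 1\in\sigma(S)\cap\bbT$ for every $k\in\Z$.

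\textbf{Step 3: transfer to $T$.} We have $0\le S\le T$ and $\mathrm{r}(T)=1$. The inclusion $\sigma(S)\cap\bbT\subseteq\sigma(T)\cap\bbT$ from Theorem~\ref{theorem:peripheral-domination} then gives $\lambda^k\in\sigma(T)$ for all $k\in\Z$.

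\textbf{Main obstacle.} Steps 2 and 3 are routine; the main obstacle is the invertibility of $U$ in Step~1, which the argument needs. I would take it from the appendix. If only $\abs{Ux}=\abs{x}$ were available, I would first show that such $U$ is order bounded with $\abs{U}\le I$, hence central. I would then construct a central operator $U^\ast$ with $U^\ast U=UU^\ast=I$ via the functional calculus on the centre, which is an AM-space with unit $I$ where $U$ corresponds to a unimodular function.
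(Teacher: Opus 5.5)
Your proposal is correct and follows the paper's proof essentially step for step. Both arguments take invertibility of $U$ from Proposition~\ref{proposition:torsion-properties}(i) to get $U^{-1}SU=\lambda S$ and hence $\lambda\sigma(S)=\sigma(S)$, use $1\in\sigma(S)$ for the positive operator $S$ with $\mathrm{r}(S)=1$, and then transfer the orbit to $\sigma(T)$ via Theorem~\ref{theorem:peripheral-domination}; the fallback construction of $U^{-1}$ via the centre in your last paragraph is unnecessary, since that proposition already gives $U^{-1}=\overline{U}$.
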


\begin{proof}
    Let $S$ be a minorant of $T$ that is rotationally self-similar to $T$ with respect to $\lambda$. Then there exists a torsion operator $U$ on $E$ such that $SU = \lambda US$. By Proposition~\ref{proposition:torsion-properties}(i), $U$ is an invertible isometry, so one has $\lambda S = U^{-1} S U$ and therefore $\lambda \sigma(S) = \sigma(S)$. Furthermore, since $S$ is positive and $\mathrm{r}(S) = 1$, one has $1 \in \sigma(S)$ by \cite[Proposition~V.4.1]{Schaefer1974}. Thus, it follows that $\lambda^k \in \sigma(S)$ for all $k \in \Z$ and Theorem~\ref{theorem:peripheral-domination} implies that $\lambda^k \in \sigma(T)$.
\end{proof}

Unfortunately, the general situation is not as simple as the setup of the prior proposition. More precisely, we generally cannot expect to find a rotationally self-similar minorant, as the following example demonstrates.

\begin{example} \label{example:irrational-shift}
    Let $E \coloneqq C(\bbT)$ and fix $\lambda \in \bbT$ which is not a root of unity. Consider the irrational shift
    \begin{align*}
        T \colon E \to E, \quad T f(z) \coloneqq f(\lambda z).
    \end{align*}
    Then $T$ is a positive invertible isometry and thus $\sigma(T) \subseteq \bbT$. Moreover, the functions $z \mapsto z^k$ are eigenvectors for the eigenvalues $\lambda^k$, $k \in \Z$. Since these eigenvalues are dense in $\bbT$, one has $\sigma(T) = \bbT$. We show that $T$ does not admit a minorant that is rotationally self-similar to it with respect to $-1$. For this, we just need the following two insights:
    \begin{enumerate}[\upshape (a)]
        \item Each positive operator $S$ on $E$ with $S \leq T$ and $\mathrm{r}(S) = 1$ coincides with $T$.
        \item There is no torsion operator $U$ such that $T U = -U T$.
    \end{enumerate}
    (a): First observe that every positive operator $S$ on $E$ with $S \leq T$ is of the form $S = M_a T$ for some $a \in C(\bbT)$ with $0 \leq a \leq 1$, where $M_a f \coloneqq af$. Indeed, one has $0 \leq Sf(z) \leq Tf(z) = f(\lambda z)$ for each $f \geq 0$ and $z \in \bbT$. Thus, the positive functional $f \mapsto Sf(z)$ is dominated by evaluation at $\lambda z$ and is therefore a scalar multiple of that evaluation. Hence, for each $z \in \bbT$ there exists $a(z) \in [0, 1]$ such that $S f(z) = a(z)f(\lambda z)$. For the special case $f = \one$, one gets that $a = S \one$, so $a \in C(\bbT)$.

    We next show that $\mathrm{r}(S) = 1$ implies $a = \one$ and thus $S = T$. Arguing by contraposition, suppose that $a \neq \one$. Then there exist a non-empty open set $O \subseteq \bbT$ and a constant $c < 1$ such that $a \leq c$ on $O$. Since the rotation $z \mapsto \lambda z$ is minimal, one has $N \in \N$ such that $\bbT = \bigcup_{n = 0}^{N - 1} \lambda^{-n} O$ (cf.~\cite[Proposition~3.3]{Eisner2015}). Thus, for every $z \in \bbT$, at least one of the elements $z, \lambda z, \ldots, \lambda^{N-1}z$ is contained in $O$. Consequently,
    \begin{align*}
    S^N \one(z)= \prod_{k = 0}^{N - 1} a(\lambda^k z) \leq c < 1
    \end{align*}
    for all $z \in \bbT$. Hence $\norm{S^N} = \norm{S^N \one}_\infty \leq c$ and therefore $\mathrm{r}(S) \leq c^{1/N} < 1$.

    (b): If $Tf=-f$, then $T^2f=f$. Since the rotation $z \mapsto \lambda^2 z$ is minimal, every continuous function fixed by $T^2$ is constant (see \cite[Proposition~3.6 \& Lemma~4.20]{Eisner2015}). Hence $f$ is constant, and $T f = -f$ implies $f = 0$, which shows that $-1 \notin \sigma_p(T)$. Now suppose that there were a torsion operator $U$ such that $T U = -U T$. Then $T U \one = -U T \one = - U \one$. On the other hand, $\abs{U \one} = \one$, so $U \one \neq 0$. Thus $-1 \in \sigma_p(T)$, which is clearly a contradiction.
\end{example}

We now turn to an asymptotic version of rotational self-similarity. Let $T$ be a positive operator on a complex Banach lattice $E$ with $\mathrm{r}(T)=1$. A sequence $(S_n)_{n \in \N}$ of critical minorants of $T$ is called
\emph{asymptotically rotationally self-similar} to $T$ with respect to some $\lambda \in \bbT$ if there exists a torsion operator $U$ on $E$ such that
\begin{align*}
    \norm{S_n U - \lambda U S_n} \to 0
\end{align*}
as $n \to \infty$.

\begin{remark}
    To motivate the following construction, note that one can characterise when an operator admits a rotationally self-similar minorant in the following way: Let $U$ be a torsion operator, $\lambda \in \bbT$, and consider the operator
    \begin{align*}
        \Psi \colon \calL(E) \to \calL(E), \quad \Psi(Q) \coloneqq \overline \lambda U^{-1} Q U.
    \end{align*}
    Then $S$ is a rotationally self-similar minorant of some positive operator $T$ with respect to $\lambda$ if and only if $\Psi(S) = S$, i.e., $S$ is a fixed point of $\Psi$. So if such an $S$ does not exist (as in Example~\ref{example:irrational-shift}) and one looks for a sequence $(S_n)_{n \in \N}$ of critical minorants of $T$ with respect to $\lambda$, one might be tempted to iterate $\Psi$ on $T$ to find such a sequence. However, $\Psi(T)$ does not even need to be real in general. On the other hand, the operator $\Re(\Psi(T)) = \frac 1 2 (\Psi(T) + \overline{\Psi(T)})$ is real, but in general not positive. Using that $U$ is a torsion operator, one can show that
    \begin{align*}
        - T \leq \frac 1 2 (\Psi(T) + \overline{\Psi(T)}) \leq T.
    \end{align*}
    So if we take another mean, this time with $T$, then we get that the operator
    \begin{align*}
        \Phi \colon \calL(E) \to \calL(E), \quad \Phi(Q) \coloneqq \frac{\Re(\Psi(Q)) + Q}{2} = \frac{2Q + \overline\lambda U^{-1} Q U + \lambda U Q U^{-1}}{4}
    \end{align*}
    is indeed positive. Thus, we can iterate $\Phi$ on $T$ without giving up the positivity of its iterates.
\end{remark}

\begin{proposition} \label{proposition:asymptotically-rotationally-self-similar-minorants}
Let $T$ be a positive operator on a Dedekind complete complex Banach lattice $E$ such that $\mathrm{r}(T)=1$. If $\lambda \in \sigma_p(T) \cap \bbT$, then $T$ admits a sequence of minorants that is asymptotically rotationally self-similar to it with respect to $\lambda$.
\end{proposition}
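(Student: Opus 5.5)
The plan is to realise the heuristic of the preceding remark: take the torsion operator $U$ from the phase of a peripheral eigenvector, set $S_n \coloneqq \Phi^n(T)$ with $\Phi$ as in that remark, and verify three things. These are that each $S_n$ is a minorant of $T$, that $\mathrm{r}(S_n)=1$, and that $\norm{S_nU-\lambda US_n}\to 0$.

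\emph{Choice of $U$.} Fix $x\neq 0$ with $Tx=\lambda x$ and put $u\coloneqq\abs{x}$. By Dedekind completeness, Appendix~\ref{section:torsion-operators} should provide a torsion operator $U$ with $Uu=x$: multiplication by the phase of $x$ on the band generated by $x$, extended by the identity on its disjoint complement. I expect it to satisfy three properties:
\begin{itemize}
\item $U$ is an invertible isometry;
\item $U^{-1}$ is multiplication by the conjugate phase, so $\overline{U}=U^{-1}$;
\item $\abs{UQU^{-1}y}\le Q\abs{y}$ and $\abs{U^{-1}QUy}\le Q\abs{y}$ for positive $Q$.
\end{itemize}

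\emph{Positivity, monotonicity and the minorant property.} Define $\Psi(Q)\coloneqq\overline\lambda U^{-1}QU$. Then $\Psi$ is an invertible linear isometry of $\calL(E)$ with $\Psi^{-1}(Q)=\lambda UQU^{-1}$. Since $\overline U=U^{-1}$, for real $Q$ one has $\overline{\Psi(Q)}=\Psi^{-1}(Q)$, so
\begin{align*}
    \Phi=\tfrac14\bigl(2I+\Psi+\Psi^{-1}\bigr)=\Psi^{-1}\Bigl(\tfrac{I+\Psi}{2}\Bigr)^{2}.
\end{align*}
By the remark, $-Q\le \Re\Psi(Q)\le Q$ for positive $Q$. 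Hence $0\le\Phi(Q)\le Q$, and the sequence $S_n=\Phi^n(T)$ is positive and decreasing with $S_n\le T$. Domination then gives $\mathrm{r}(S_n)\le 1$.

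\emph{Asymptotic self-similarity.} This reduces to operator theory on $\calL(E)$. Since $S_nU-\lambda US_n=\lambda U(\Psi(S_n)-S_n)$ and $\Psi$ commutes with $\Phi$, we get
\begin{align*}
    \norm{S_nU-\lambda US_n}\le\norm{(I-\Psi)\Phi^n}\,\norm{T}
    =\Bigl\|(I-\Psi)\Bigl(\tfrac{I+\Psi}{2}\Bigr)^{2n}\Bigr\|\,\norm{T}.
\end{align*}
The last equality uses that $\Psi^{-n}$ is isometric. I will then invoke the classical estimate $\norm{(I-V)((I+V)/2)^m}=O(m^{-1/2})$ for a contraction $V$. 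To prove it, expand $((I+V)/2)^m$ binomially and telescope. This leaves $2^{-m}\sum_k\abs{\binom{m}{k}-\binom{m}{k-1}}$, which is $O(m^{-1/2})$. So the defect tends to $0$ with no spectral assumptions at all.

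\emph{Main obstacle: $\mathrm{r}(S_n)=1$.} I would look for an invariant preserved by $\Phi$ that forces spectral radius at least $1$. The natural one is
\begin{align*}
    \Re\bigl(\overline\lambda U^{-1}QU\bigr)u\ge u \quad\text{together with}\quad Qu\ge u.
\end{align*}
For $Q=T$ both hold: $\overline\lambda U^{-1}TUu=\overline\lambda U^{-1}Tx=u$ and $Tu\ge\abs{Tx}=u$. From $Qu\ge u$ with $u>0$ one gets $\mathrm{r}(Q)\ge 1$. Preservation under $\Phi$ requires controlling the conjugated terms $\Psi^{\pm 1}(Q)$ applied to $u$. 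This is not automatic, because $U^{-1}u$ and $U^2u$ are not multiples of $u$.

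If this invariant fails to propagate, my fallback is to replace $u$ by a suitable vector built from $u$ and $x$. Alternatively, I would pass to the lattice extension suggested in the abstract, for instance $\ell^\infty(\Z;E)$ with $U$ acting as a shift-twisted phase. There a common positive subinvariant vector for $T$ and for all conjugates $\Psi^k(T)$ might be available. Establishing this uniform lower bound is, I expect, the genuinely delicate step; the norm estimate above is routine by comparison.
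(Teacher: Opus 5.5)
Your choice of $U$, the positivity and monotonicity of $\Phi$, and the asymptotic estimate all match the paper. Your bound $2^{-2n}\sum_k\bigl|\binom{2n}{k}-\binom{2n}{k-1}\bigr|=2\cdot 4^{-n}\binom{2n}{n}$ is exactly the paper's telescoping of the symmetric unimodal weights $p_{n,k}$.

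\textbf{The gap.} The proposal has a genuine gap at the step you flag yourself: you never prove $\mathrm{r}(S_n)=1$ for $n\geq 2$. Your invariant does not propagate beyond the first iterate. Your computation does give $S_1u=\Phi(T)u=\frac12(Tu+u)\geq u$, hence $\mathrm{r}(S_1)=1$. For $S_2$, however, you would need to control $\Psi^{\pm1}(S_1)u$, which brings in vectors such as $U^{2}u$ that are not comparable to $u$. In general you should not expect a common positive subinvariant vector. The only order information that survives the iteration is
\begin{align*}
    (T-S_n)u\leq \tfrac n2(Tu-u),
\end{align*}
with a constant that grows in $n$. Turning this into $\mathrm{r}(S_n)=1$ by purely order-theoretic means is exactly what needs a growth hypothesis (compare Lemma~\ref{lemma:criticality-under-growth} in the $(G)$-solvable case). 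So neither the invariant nor the $\ell^\infty(\bbZ;E)$ fallback will close the gap without further input.

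\textbf{The missing idea.} Use the peripheral domination theorem (Theorem~\ref{theorem:peripheral-domination}) instead of a subinvariant vector. Since $\Phi$ is positive and $\Phi(Q)\leq Q$ for $Q\geq 0$, telescoping gives
\begin{align*}
    T-S_n=\sum_{j=0}^{n-1}\Phi^j(T-S_1)\leq n(T-S_1),
\end{align*}
that is, $S_1\leq (1-\frac1n)T+\frac1n S_n\leq T$.
\begin{itemize}
\item Both outer operators have spectral radius $1$, so the positive operator $S_n+(1-\frac1n)(T-S_n)$ has spectral radius $1$.
\item Being positive, it therefore contains $1$ in its spectrum.
\item Apply Theorem~\ref{theorem:peripheral-domination} to the pair $0\leq S_n\leq T$ with $\mu=1-\frac1n\in\bbD$. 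This gives $\sigma(S_n)\cap\bbT=\sigma(S_n+\mu(T-S_n))\cap\bbT\ni 1$, so $\mathrm{r}(S_n)=1$.
\end{itemize}
The Vesentini-type rigidity of the peripheral spectrum along the holomorphic family $\mu\mapsto S_n+\mu(T-S_n)$ is what replaces the uniform lower bound you were looking for.
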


\begin{proof}
Let $0\neq z\in E$ satisfy $Tz=\lambda z$ and set $u\coloneqq\abs z$. By Proposition~\ref{proposition:torsion-operator-construction}, there exists a torsion operator $U$ on $E$ such that $Uu=z$.

Consider the operator
\begin{align*}
    \Phi \colon \calL(E) \to \calL(E), \quad \Phi(Q) \coloneqq \frac{2Q + \overline\lambda U^{-1} Q U + \lambda U Q U^{-1}}{4}
\end{align*}
and define the sequence $(S_n)_{n \in \N}$ recursively via $S_1 = \Phi(T)$ and $S_{n + 1} = \Phi(S_n)$ for all $n \in \N$. Then Proposition~\ref{proposition:torsion-properties}(i) yields
\begin{align*}
    \Phi(Q) x
    &= \frac{2Qx + \overline\lambda U^{-1} Q Ux + \lambda U Q U^{-1}x}{4} \\
    &= \frac{2Qx + \overline\lambda U^{-1} Q Ux + \overline{\overline\lambda U^{-1} Q Ux}}{4} \\
    &= \frac{Qx + \Re(\overline\lambda U^{-1} Q Ux)}{2}
\end{align*}
for each positive operator $Q$ on $E$ and $x \in E_+$. Since $\abs{\overline \lambda U^{-1} Q U x} \leq Q \abs{Ux} = Qx$, it follows that $0 \leq \Phi(Q) \leq Q$. Thus, it follows inductively that $0 \leq S_{n+1} \leq S_n \leq T$ for all $n \in \N$.

Next, we show that each $S_n$ is a critical minorant of $T$. Since $Tu \geq \abs{Tz} = u$, $Uu=z$, and $Tz=\lambda z$, one has
\begin{align*}
    S_1u = \Phi(T)u = \frac{Tu+\Re(\overline\lambda U^{-1} T U u)}{2} = \frac{Tu + u}{2} \geq u.
\end{align*}
Thus $S_1^m u\geq u$ for every $m\in\N$, so $\mathrm{r}(S_1)\geq1$. On the other hand, $0\leq S_1\leq T$ implies $\mathrm{r}(S_1)\leq \mathrm{r}(T)=1$, and hence $\mathrm{r}(S_1)=1$.

Now let $n\geq2$. Since $\Phi$ is positive and $\Phi(Q)\leq Q$ for each $Q\geq0$, one obtains
\begin{align*}
    T - S_n = \sum_{j = 0}^{n - 1} \Phi^j (T - S_1) \leq n (T - S_1).
\end{align*}
Consequently, $S_1 \leq (1-\frac 1 n) T + \frac 1 n S_n \leq T$. Since $\mathrm{r}(S_1)=\mathrm{r}(T)=1$, monotonicity of the spectral radius implies that $\mathrm{r}((1 - \frac 1 n) T + \frac 1 n S_n) = 1$, and Theorem~\ref{theorem:peripheral-domination} yields $\mathrm{r}(S_n)=1$ for all $n \in \N$.

It remains to prove that the sequence $(S_n)_{n \in \N}$ is asymptotically rotationally self-similar. Set
\begin{align*}
    p_{n,k}
    \coloneqq
    4^{-n}\binom{2n}{n+k}
    \qquad (-n\leq k\leq n),
\end{align*}
and set $p_{n,k}\coloneqq0$ otherwise. Consider the invertible isometry $\calA \colon \calL(E) \to \calL(E)$ given by $\calA(Q) \coloneqq \overline\lambda U^{-1} Q U$. Since $\Phi = (2I + \calA + \calA^{-1})/4 = \calA^{-1}(I + \calA)^2/4$, the binomial formula yields
\begin{align*}
    S_n
    =
    \sum_{k\in\Z}
    p_{n,k}\,
    \overline\lambda^{\,k}U^{-k}TU^k.
\end{align*}
Conjugation by $U$, followed by multiplication by $\overline\lambda$, shifts the coefficients by one. Since $U$ is an isometry and $(p_{n,k})_{k\in\Z}$ is symmetric and unimodal with maximum $p_{n,0}$, it follows that
\begin{align*}
    \norm{S_nU-\lambda US_n}
    &=
    \norm{\overline\lambda U^{-1} S_n U - S_n}\\
    &\leq
    \norm T\sum_{k\in\Z}\abs{p_{n,k-1}-p_{n,k}} \leq 2p_{n,0}\norm T =
    \frac{2}{4^n}\binom{2n}{n}\norm T
    \to 0
\end{align*}
as $n \to \infty$. Hence $(S_n)_{n \in \N}$ is asymptotically rotationally self-similar to $T$ with respect to $\lambda$.
\end{proof}

Let us briefly recall the concept of an
ultrapower of a Banach space $E$. Let $\calU$ be a free ultrafilter on $\N$ and consider the space $\ell^\infty(E)$ of bounded $E$-valued sequences endowed with its canonical norm $\norm{z}_\infty \coloneqq \sup_{n \in \N} \norm{z_n}$. Define the space
\begin{align*}
    c_{0, \calU}(E) \coloneqq \set{z \in \ell^\infty(E) : \lim_{n \to \calU} \norm{z_n} = 0}.
\end{align*}
The quotient space $E^\calU \coloneqq \ell^\infty(E)/c_{0, \calU}(E)$ is called the \emph{ultrapower} of $E$ with respect to the ultrafilter $\calU$. We write $x^\calU = (x_n)_\calU$ for the equivalence class of $(x_n)_{n \in \N}$ and recall that
\begin{align*}
    \norm{x^\calU} = \lim_{n \to \calU} \norm{x_n}.
\end{align*}
If $E$ is a complex Banach lattice, then $c_{0, \calU}(E)$ is a closed ideal and $E^\calU$ is again a complex Banach lattice with $\abs{x^\calU} = (\abs{x_n})_\calU$. We identify $E$ with its canonical isometric copy in $E^\calU$ via $x \mapsto (x)_\calU$, where $(x)_\calU$ denotes the equivalence class of the constant sequence $(x)_{n\in\N}$. Every bounded sequence $(T_n)_{n\in\N}$ in $\calL(E)$ induces an operator
\begin{align*}
    (T_n)_\calU \colon E^\calU \to E^\calU, \qquad (T_n)_\calU (x_n)_\calU \coloneqq (T_nx_n)_\calU,
\end{align*}
and one has $\norm{(T_n)_\calU} = \lim_{n \to \calU}\norm{T_n}$. In particular, every $T \in \calL(E)$ induces the ultrapower operator $T^\calU \coloneqq(T)_\calU$, and it is well known that $\sigma(T^\calU)=\sigma(T)$. For more information on ultrapowers, we refer to \cite[Section~V.1]{Schaefer1974}.

The following proposition shows that an ultrapower improves asymptotic rotational self-similarity to rotational self-similarity.

\begin{proposition} \label{proposition:rotational-self-similarity-in-ultrapowers}
    Let $T$ be a positive operator on a complex Banach lattice $E$ with $\mathrm{r}(T) = 1$. Moreover, let $\calU$ be a free ultrafilter on $\N$. Suppose that $T$ admits a sequence of minorants that is asymptotically rotationally self-similar to it with respect to some $\lambda \in \bbT$. Then the operator $T^\calU$ on $E^\calU$ admits a minorant that is rotationally self-similar to it with respect to $\lambda$.
\end{proposition}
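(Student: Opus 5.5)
The plan is to take the minorant in the ultrapower to be the ultraproduct of the given sequence. Let $(S_n)_{n\in\N}$ be the critical minorants of $T$ and $U$ the torsion operator from the definition of asymptotic rotational self-similarity. Since $0 \le S_n \le T$, the sequence satisfies $\norm{S_n} \le \norm{T}$, so it is bounded. Hence
\begin{align*}
    S \coloneqq (S_n)_\calU \quad\text{and}\quad U^\calU \coloneqq (U)_\calU
\end{align*}
are well-defined bounded operators on $E^\calU$. I will then verify, in order, that $U^\calU$ is a torsion operator, that $0 \le S \le T^\calU$, that $\mathrm{r}(S) = 1$, and that $SU^\calU = \lambda U^\calU S$.

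\emph{Lattice properties.} Recall that $\abs{x^\calU} = (\abs{x_n})_\calU$. Then
\begin{align*}
    \abs{U^\calU x^\calU} = (\abs{Ux_n})_\calU = (\abs{x_n})_\calU = \abs{x^\calU},
\end{align*}
so $U^\calU$ is a torsion operator. For positivity, note that every positive element $x^\calU$ of $E^\calU$ has a representative with positive entries, because $x^\calU = \abs{x^\calU} = (\abs{x_n})_\calU$. For such a representative, $S_n\abs{x_n} \ge 0$ and $(T - S_n)\abs{x_n} \ge 0$ for every $n$. Since $c_{0,\calU}(E)$ is an ideal, the quotient map is a lattice homomorphism, so these inequalities pass to the classes. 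This gives $0 \le S \le T^\calU$. The identity
\begin{align*}
    \norm{SU^\calU - \lambda U^\calU S} = \norm{(S_nU - \lambda US_n)_\calU} = \lim_{n\to\calU}\norm{S_nU - \lambda US_n} = 0
\end{align*}
follows from the norm formula for ultraproducts of operators and the assumed asymptotic relation. Hence $SU^\calU = \lambda U^\calU S$ holds exactly.

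\emph{Spectral radius.} This is the step I expect to be the main obstacle. In general, the spectral radius of an ultraproduct of operators need not equal the $\calU$-limit of their spectral radii. I will bypass this with a norm estimate.

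\emph{Lower bound.} For each $k \in \N$ we have $S^k = (S_n^k)_\calU$. Moreover $\norm{S_n^k} \ge \mathrm{r}(S_n^k) = \mathrm{r}(S_n)^k = 1$, because each $S_n$ is a critical minorant. Therefore $\norm{S^k} = \lim_{n\to\calU}\norm{S_n^k} \ge 1$ for every $k$, and Gelfand's formula yields $\mathrm{r}(S) \ge 1$.

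\emph{Upper bound.} Since $0 \le S \le T^\calU$, monotonicity of the spectral radius for positive operators gives $\mathrm{r}(S) \le \mathrm{r}(T^\calU)$. Finally, $\sigma(T^\calU) = \sigma(T)$ gives $\mathrm{r}(T^\calU) = \mathrm{r}(T) = 1$.

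Combining the two bounds, $\mathrm{r}(S) = 1$. So $S$ is a critical minorant of $T^\calU$ that is rotationally self-similar to it with respect to $\lambda$, with torsion operator $U^\calU$.
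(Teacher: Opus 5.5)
Your proposal is correct and follows the paper's proof essentially step by step. The minorant is the ultraproduct $(S_n)_\calU$, the relation $SU^\calU=\lambda U^\calU S$ becomes exact because $\lim_{n\to\calU}\norm{S_nU-\lambda US_n}=0$, and $\mathrm{r}(S)\geq 1$ follows from $\norm{S^k}=\lim_{n\to\calU}\norm{S_n^k}\geq 1$ and Gelfand's formula. The only difference is cosmetic: the paper gets the upper bound from $\norm{S^k}^{1/k}\leq\norm{T^k}^{1/k}$, whereas you use monotonicity of the spectral radius together with $\sigma(T^\calU)=\sigma(T)$, which amounts to the same thing.
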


\begin{proof}
    Let $(S_n)_{n \in \N}$ be a sequence of minorants that is asymptotically rotationally self-similar to $T$ with respect to $\lambda$ and let $U$ be the corresponding torsion operator. Then $\norm{S_n U - \lambda U S_n} \to 0$ as $n \to \infty$. Since $0 \leq S_n \leq T$, one has $\norm{S_n} \leq \norm T$ for each $n \in \N$, so the operator
    \begin{align*}
        S \colon E^\calU \to E^\calU, \quad S x^\calU \coloneqq (S_n x_n)_\calU
    \end{align*}
    is well-defined. Moreover, one clearly has $0 \leq S \leq T^\calU$. Note further that $U^\calU$ is a torsion operator. The sequence $(S_n U - \lambda U S_n)_{n \in \N}$ is mapped to the zero operator by the ultrapower embedding, so $S U^\calU = \lambda U^\calU S$. Finally, for every fixed $k \in \N$, one has
    \begin{align*}
        1 \leq \lim_{n \to \calU} \norm{S_n^k}^{1 / k} = \norm{S^k}^{1 / k} \leq \norm{T^k}^{1 / k}.
    \end{align*}
    Letting $k \to \infty$, we obtain $\mathrm{r}(S) = 1$ by Gelfand's formula. Thus, $S$ is a minorant that is rotationally self-similar to $T^\calU$ with respect to $\lambda$.
\end{proof}

\begin{corollary} \label{corollary:asymptotic-rotational-self-similarity-implies-cyclicity}
    Let $T$ be a positive operator on a complex Banach lattice $E$ with $\mathrm{r}(T) = 1$. If $T$ admits a sequence of minorants that is asymptotically rotationally self-similar to it with respect to some $\lambda \in \bbT$, then $\lambda^k \in \sigma(T)$ for all $k \in \Z$.
\end{corollary}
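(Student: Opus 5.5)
This corollary follows by chaining Proposition~\ref{proposition:rotational-self-similarity-in-ultrapowers} with Proposition~\ref{proposition:rotational-self-similarity-implies-cyclicity} and then transferring the spectral information back from $E^\calU$ to $E$.

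First, fix a free ultrafilter $\calU$ on $\N$. By hypothesis, $T$ admits a sequence of minorants that is asymptotically rotationally self-similar to it with respect to $\lambda$. Proposition~\ref{proposition:rotational-self-similarity-in-ultrapowers} therefore yields a minorant $S$ of $T^\calU$ on $E^\calU$ that is rotationally self-similar to $T^\calU$ with respect to $\lambda$.

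Second, we check that Proposition~\ref{proposition:rotational-self-similarity-implies-cyclicity} applies to $T^\calU$. We already know that $E^\calU$ is a complex Banach lattice. The operator $T^\calU$ is positive, since $T^\calU (x_n)_\calU = (Tx_n)_\calU$ and the modulus in $E^\calU$ is computed coordinatewise, $\abs{x^\calU} = (\abs{x_n})_\calU$. Moreover, $\sigma(T^\calU)=\sigma(T)$, so $\mathrm{r}(T^\calU)=\mathrm{r}(T)=1$; alternatively, this follows from $\norm{(T^\calU)^k}=\norm{T^k}$ and Gelfand's formula. Proposition~\ref{proposition:rotational-self-similarity-implies-cyclicity} then gives $\lambda^k\in\sigma(T^\calU)$ for all $k\in\Z$.

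Finally, $\sigma(T^\calU)=\sigma(T)$ yields $\lambda^k\in\sigma(T)$ for all $k \in \Z$. None of these steps is a real obstacle. The only point needing care is the identity $\sigma(T^\calU)=\sigma(T)$, which the paper cites as well known. If one wants to justify it, only the inclusion $\sigma(T^\calU)\subseteq\sigma(T)$ is needed: if $\mu\notin\sigma(T)$, then $(R(\mu,T))_\calU$ is a two-sided inverse of $\mu-T^\calU$, because the ultrapower construction $(T_n)_\calU$ is multiplicative on bounded sequences of operators.
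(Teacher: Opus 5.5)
Your proposal is correct and matches the paper's proof: pass to $E^\calU$ via Proposition~\ref{proposition:rotational-self-similarity-in-ultrapowers}, apply Proposition~\ref{proposition:rotational-self-similarity-implies-cyclicity} to $T^\calU$ (using $\mathrm{r}(T^\calU)=1$), and transfer back via $\sigma(T^\calU)=\sigma(T)$. Your remark that only $\sigma(T^\calU)\subseteq\sigma(T)$ is needed, with $(R(\mu,T))_\calU$ serving as the inverse, is a valid justification of the one fact the paper simply cites.
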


\begin{proof}
    Fix a free ultrafilter $\calU$ on $\N$. By Proposition~\ref{proposition:rotational-self-similarity-in-ultrapowers}, $T^\calU$ admits a minorant that is rotationally self-similar to it with respect to $\lambda$. Since $\mathrm{r}(T^\calU) = \mathrm{r}(T) = 1$, Proposition~\ref{proposition:rotational-self-similarity-implies-cyclicity} yields $\lambda^k \in \sigma(T^\calU) = \sigma(T)$ for every $k \in \Z$.
\end{proof}

We are now in a position to prove the main result. To that end, we use another classical ultrapower argument and synthesise the observations made in this section.

\begin{proof}[Proof of Theorem~\ref{theorem:cyclicity}]
    Fix $\lambda \in \sigma_{\mathrm{per}}(T)$. Fix a free ultrafilter $\calU$ on $\N$. We now pass to the space $\widehat E \coloneqq (E^\calU)''$ and the operator $\widehat T \coloneqq (T^\calU)''$. Then $\widehat E$ is Dedekind complete and $\widehat T$ is positive, since dual Banach lattices are Dedekind complete and adjoints of positive operators are positive (see \cite[Section~II.5]{Schaefer1974}). Moreover, spectra are preserved under passage to the bidual, so $\sigma(\widehat T) = \sigma(T^\calU) = \sigma(T)$ and $\mathrm{r}(\widehat T) = 1$. Since $\lambda \in \partial\sigma(T) \subseteq \sigma_{\mathrm{app}}(T)$, one has $\lambda \in \sigma_p(T^\calU)$. The canonical embedding $J \colon E^\calU \to (E^\calU)''$ satisfies $\widehat T J = J T^\calU$, so $\lambda \in \sigma_p(\widehat T) \cap \bbT$. Thus, Proposition~\ref{proposition:asymptotically-rotationally-self-similar-minorants} implies that $\widehat T$ admits a sequence of minorants that is asymptotically rotationally self-similar to it with respect to $\lambda$. Corollary~\ref{corollary:asymptotic-rotational-self-similarity-implies-cyclicity} now yields $\lambda^k \in \sigma(\widehat T) = \sigma(T)$ for every $k \in \Z$, which proves the cyclicity conjecture in full generality.
\end{proof}

\appendix

\section{Torsion operators} \label{section:torsion-operators}

In this appendix, we briefly recall some facts about the signum operator on complex Banach lattices and construct a torsion operator which is needed in the article. For a bounded operator $T$ on a complex Banach lattice, we write $\overline T x \coloneqq \overline{T \overline x}$. We recall the following consequence of the signum construction in \cite[Section~C-I.8]{NagelEd}:

\begin{proposition} \label{proposition:signum-operator}
    Let $E$ be a complex Dedekind complete Banach lattice and let $x \in E$. Then there exists a unique complex-linear operator $S_x \colon E \to E$ satisfying the following properties:
    \begin{enumerate}[\upshape (i)]
        \item $S_x \overline x = \abs x$, where $\overline x = \operatorname{Re} x - \ui \operatorname{Im} x$.
        \item $\abs{S_x y} \leq \abs y$ for each $y \in E$.
        \item $S_x y = 0$ for each $y \in E$ such that $x \perp y$.
    \end{enumerate}
\end{proposition}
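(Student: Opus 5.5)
Since $E$ is Dedekind complete, I will build $S_x$ from the band structure. Two facts are assumed: the principal band $B_x$ generated by $x$ (equivalently by $\abs x$) is a projection band, and the band projection $P_x$ is available. Outside $B_x$ we set $S_x = 0$. On $B_x$ we want $S_x$ to act as multiplication by the ``conjugate phase'' $\overline{x}/\abs{x}$, that is, as the inverse of the phase of $\overline x$.

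\textbf{Construction.} First I will define $S_x$ on the principal ideal $E_{\abs x}$, using its representation as $C(K)$ for a compact space $K$ (Kakutani). Under this identification $\abs x$ corresponds to $\one$ and $x$ to a continuous function $\xi$ with $\abs{\xi} = \one$. Multiplication by $\xi$ is a torsion operator on $E_{\abs x}$, and it maps $\overline x$ to $\abs x$. It also extends to the ideal generated by $\abs x$ together with the elements $\abs y$ for $y \in B_x$. Lemma C-I.8 in \cite{NagelEd} is the precise version of this step, and I would largely defer to it.

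More concretely, for $y \in B_x$ I would define
\begin{align*}
    S_x y \coloneqq \lim_{n} \bigl(\text{phase of } \overline{x}\bigr)\text{-multiplication on } y \wedge n\abs x\text{-truncations}.
\end{align*}
Here order convergence is available because of Dedekind completeness: $\abs y \wedge n\abs x \uparrow \abs y$ for $y \in B_x$. Then I set $S_x \coloneqq S_x P_x$ on all of $E$.

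\textbf{Properties.} Property (i) holds by construction. Property (ii) follows because multiplication by a unimodular function is a lattice isometry on each $E_{\abs x}$-piece, and contractivity passes to suprema. Property (iii) holds because $P_x y = 0$ whenever $x \perp y$.

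\textbf{Uniqueness.} Suppose $S$ and $S'$ both satisfy (i)–(iii), and let $D = S - S'$. By (iii), $D$ vanishes on $B_x^\perp$. By (ii), $\abs{Sy} \le \abs y$, so $S$ acts on $E_{\abs x} \cong C(K)$ as an operator dominated in modulus by the identity. Every such operator is multiplication by some $m$ with $\abs m \le 1$; this is the standard fact that operators dominated by the identity are central, i.e.\ orthomorphisms. By (i), $m\,\overline{\xi} = \one$, and together with $\abs m \le 1$ this forces $m = \xi$. Hence $S = S'$ on $E_{\abs x}$. Both are contractive orthomorphisms on $B_x$ that agree on a dense order ideal, so they agree on all of $B_x$. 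Combined with (iii), this gives $S = S'$ everywhere.

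\textbf{Main obstacle.} The hardest step is the extension from the ideal $E_{\abs x}$ to the full band $B_x$ while keeping (ii), because the truncation limits need not be norm limits. I would try to avoid this by working with orthomorphisms throughout. The central multiplication by $\xi$ on $E_{\abs x}$ extends uniquely to an orthomorphism of $B_x$, since Dedekind completeness makes the orthomorphism algebra of a band large enough. The modulus bound is preserved because $\abs{\pi y} = \abs{\pi}\abs{y}$ for orthomorphisms $\pi$.
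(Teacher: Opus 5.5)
Your proposal is correct and follows essentially the paper's route: obtain the operator on the principal ideal $E_{\abs x}$ (Nagel's C-I.8, i.e.\ multiplication by the unimodular function $\xi$ in the $C(K)$ picture), then extend it to the band $\set{\abs x}^{\perp\perp}$ via the order-continuous extension of central operators (this is where Dedekind completeness enters), set it equal to zero on $\set{\abs x}^{\perp}$, and derive uniqueness from order continuity, order density of $E_{\abs x}$ in the band, and (iii). One slip to fix: the multiplier is the phase $\xi = x/\abs x$ of $x$, not $\overline x/\abs x$ as written in your first paragraph and in the truncation formula (multiplying $\overline x$ by $\overline x/\abs x$ does not give $\abs x$), although your construction and uniqueness argument do use $\xi$ correctly.
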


\begin{proof}
    If $x = 0$, set $S_x = 0$. Otherwise, put $u \coloneqq \abs x$ and let $E_u \coloneqq \set{y \in E : \abs y \leq c u \text{ for some } c \geq 0}$ be the principal ideal generated by $u$. By \cite[Section~C-I.8]{NagelEd}, there exists a unique operator on $E_u$ satisfying (i) and (ii). This operator is order continuous and extends to the band $\set{u}^{\perp\perp}$ with the same domination by the order-continuous extension theorem for central operators (see the proof of \cite[Proposition~3.12(2)]{deJeu2025}). Since $E$ is Dedekind complete, this band is a projection band. Extending the operator by zero on $\set{u}^{\perp}$ yields the required operator $S_x$ on $E$. Finally, (ii) implies order continuity, so uniqueness follows from uniqueness on $E_u$, its order density in $\set{u}^{\perp\perp}$, and (iii).
\end{proof}

In the situation above, the operator $S_x$ is called the \emph{signum operator} with respect to $x$. For the next lemma, recall that in Dedekind complete Banach lattices every band is a projection band (see \cite[Theorem~1.2.9]{MeyerNieberg1991}).

\begin{lemma} \label{lemma:signum-properties}
    Let $E$ be a complex Dedekind complete Banach lattice and let $z \in E$. Let $P$ denote the band projection onto $\{\abs z\}^{\perp\perp}$. Then the following assertions hold:
    \begin{enumerate}[\upshape (i)]
        \item $S_z = S_zP$.
        \item $\abs{S_zx} = \abs x$ for each $x \in \{\abs z\}^{\perp\perp}$.
        \item $S_z \abs z = z$.
    \end{enumerate}
\end{lemma}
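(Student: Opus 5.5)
The three assertions follow from the defining properties (i)--(iii) of Proposition~\ref{proposition:signum-operator} combined with the band decomposition $E = \{\abs z\}^{\perp\perp} \oplus \{\abs z\}^{\perp}$, which exists since $E$ is Dedekind complete. If $z = 0$, everything is trivial: $S_0 = 0$, $P = 0$, and $\{0\}^{\perp\perp} = \{0\}$. So assume $z \neq 0$ and set $u \coloneqq \abs z$.

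For (i), fix $x \in E$ and write $x = Px + (I-P)x$. Since $(I-P)x \in \{u\}^{\perp}$, we have $(I-P)x \perp z$, so property (iii) gives $S_z(I-P)x = 0$. Hence $S_z x = S_z P x$.

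For (iii), we cannot just take the complex conjugate of property (i), because that would need $\overline{S_z} = S_z^{-1}$-type information. Instead I would argue on the principal ideal $E_u$, where the signum operator from \cite[Section~C-I.8]{NagelEd} acts as multiplication by a unimodular "function". Concretely, via the Kakutani representation $E_u \cong C(K)$, we have $u \leftrightarrow \one$, $z \leftrightarrow h$ with $\abs h = 1$, and $S_z \leftrightarrow M_{\overline h}$. But then $S_z u = \overline h$, which corresponds to $\overline z$, not $z$. So the paper's convention must be the opposite one: since property (i) reads $S_z \overline z = \abs z$, the representing function is $h$ and $S_z = M_h$. Indeed $M_h \overline h = \abs h^2 = \one$, and then $S_z \abs z = M_h \one = h = z$, which proves (iii). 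Uniqueness in Proposition~\ref{proposition:signum-operator} guarantees that the operator identified in the representation is $S_z$.

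For (ii), on $E_u$ the operator is multiplication by a unimodular function, so $\abs{S_z x} = \abs x$ for $x \in E_u$. To reach all of $\{u\}^{\perp\perp}$, I would use order density of $E_u$ together with order continuity of $S_z$, which follows from property (ii) of Proposition~\ref{proposition:signum-operator}. Take $x \in \{u\}^{\perp\perp}$ with $x \geq 0$ first, so $x = \sup_n (x \wedge nu)$. Then $\abs{S_z(x\wedge nu)} = x\wedge nu$, and passing to the order limit, using continuity of the modulus, gives $\abs{S_z x} = x$.

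For general complex $x$, pick $y \in E_u$ close in order, e.g.\ $x_n \to x$ in order with $x_n \in E_u$; these exist by order density and the fact that complex bands are complexifications. Applying the same limit argument to $\abs{S_z x_n} = \abs{x_n}$ yields the identity. The main obstacle is this limiting step: it requires that $x \mapsto \abs{x}$ is order continuous on a Dedekind complete complex Banach lattice and that order convergence of $x_n$ to $x$ is preserved by $S_z$. Both hold, since $\abs{S_z y} \leq \abs y$ dominates $S_z$ by the identity.
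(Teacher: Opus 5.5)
Your proof is correct. Part (i) is the same as the paper's, but for (ii) and (iii) you take a genuinely different route.

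\textbf{The paper's route.} It never passes to a function representation. From $\abs{S_z x} \leq P\abs x$ it places $S_z$ in the ideal centre with $0 \leq \abs{S_z} \leq P$. It then checks $\abs{S_z}\abs z = \abs z$ and applies the uniqueness in Proposition~\ref{proposition:signum-operator} to the vector $\abs z$, which gives $\abs{S_z} = P$. Finally it quotes the ideal-centre identity $S_z\overline{S_z} = \overline{S_z}S_z = \abs{S_z}^2 = P$. This single global identity yields (ii) at once via $\abs x = \abs{\overline{S_z}S_z x} \leq \abs{S_z x} \leq \abs x$ for $x$ in the band. It yields (iii) via $S_z\abs z = S_z\overline{S_z}z = Pz = z$, after conjugating $S_z\overline z = \abs z$. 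So the paper does use exactly the ``$\overline{S_z} = S_z^{-1}$-type information'' you set out to avoid, taken from the theory of the centre.

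\textbf{Your route.} You identify $S_z$ on $E_u \cong C(K)$ as multiplication by the unimodular function $h$ representing $z$. This makes (iii), and (ii) on $E_u$, immediate. You then extend (ii) to $\set{u}^{\perp\perp}$ using order density of $E_u$ and order continuity of $S_z$.

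\textbf{Comparison.} Your approach is more elementary: it reuses only the $C(K)$ multiplication argument the paper already uses in Proposition~\ref{proposition:torsion-properties}(i), and needs no ideal-centre calculus. The paper's approach needs no limiting argument. It also delivers the stronger structural fact $\overline{S_z}S_z = P$ on all of $E$.

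\textbf{Things to tidy up.}
First, delete the false start $S_z \leftrightarrow M_{\overline h}$.

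Second, justify $S_z\vert_{E_u} = M_h$ directly rather than through the uniqueness clause of Proposition~\ref{proposition:signum-operator}, which concerns operators on all of $E$. The direct argument runs as follows:
\begin{itemize}
\item Property (ii) makes $E_u$ invariant under $S_z$.
\item Any linear $V$ on $C(K)$ with $\abs{Vf} \leq \abs f$ equals $M_{V\one}$: evaluate $\abs{V(f - f(t)\one)} \leq \abs{f - f(t)\one}$ at $t$.
\item $V\overline h = \one$ together with $\abs h = \one$ then forces $V\one = h$.
\end{itemize}

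Third, for complex $x = a + \ui b$ in the band, give the approximants explicitly. Take $x_n \coloneqq (a^+ \wedge nu - a^- \wedge nu) + \ui(b^+ \wedge nu - b^- \wedge nu) \in E_u$. Then $\abs{x - x_n}$ is dominated by a sequence decreasing to $0$, because $a^{\pm}, b^{\pm} \in \set{u}^{\perp\perp}$. Finally, $\bigl\lvert \abs{S_z x_n} - \abs{S_z x} \bigr\rvert \leq \abs{x_n - x}$ finishes the limit.
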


\begin{proof}
(i): By Proposition~\ref{proposition:signum-operator}(iii), $S_z$ vanishes on the band $\{\abs z\}^{\perp}$. Therefore, one has $S_z = S_z P + S_z(I - P) = S_z P$.

(ii): By (i), one has $\abs{S_z x} = \abs{S_z Px} \leq \abs{Px} = P \abs x$ for each $x \in E$. Thus, $S_z$ is regular and, since $E$ is Dedekind complete, its operator modulus exists and satisfies $0 \leq \abs{S_z} \leq P$. In particular, $S_z$ belongs to the ideal centre of $E$.
Moreover,
\begin{align*}
    \abs z = \abs{S_z\overline z} \leq \abs{S_z}\abs z \leq P\abs z = \abs z.
\end{align*}
Hence $\abs{S_z}\abs z=\abs z$. Both $\abs{S_z}$ and $P$ satisfy the three properties in Proposition~\ref{proposition:signum-operator} for the vector $\abs z$, so uniqueness yields $\abs{S_z} = P$. For operators in the ideal centre, the identity
\begin{align*}
    S_z\overline{S_z} = \overline{S_z}S_z = \abs{S_z}^2 = P
\end{align*}
holds (see~\cite[Theorem~3.1~and~Remark~3.2(1)]{deJeu2025}). Moreover, $\abs{\overline{S_z}y} = \abs{S_z\overline y} \leq \abs y$ for every $y \in E$. So, for $x \in \set{\abs z}^{\perp\perp}$, one has
\begin{align*}
\abs x = \abs{Px} = \abs{\overline{S_z} S_z x} \leq \abs{S_z x} \leq \abs x
\end{align*}
and therefore $\abs{S_z x} = \abs x$.

(iii): Conjugating $S_z\overline z = \abs z$ yields $\overline{S_z} z = \abs z$. Thus, the identity established above implies $S_z \abs z = S_z\overline{S_z}z = Pz = z$.
\end{proof}

Recall that an operator $U \in \calL(E)$ on a complex Banach lattice $E$ is called a \emph{torsion operator} if $\abs{U x} = \abs x$ for each $x \in E$.

\begin{proposition} \label{proposition:torsion-properties}
    Let $U$ be a torsion operator on a complex Banach lattice $E$. Then the following assertions hold:
    \begin{enumerate}[\upshape (i)]
        \item $U$ is an invertible isometry and $U^{-1} x = \overline{U \overline x}$ for all $x \in E$.
        \item $U$ is disjointness preserving, i.e., $U x \perp U y$ for all $x,y \in E$ with $x \perp y$.
        \item $U$ commutes with every band projection.
    \end{enumerate}
\end{proposition}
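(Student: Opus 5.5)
Torsion operators are lattice isometries "up to a phase" on a complex Banach lattice. The plan is to reduce everything to the real structure: first show that $U$ preserves disjointness and maps positive elements to elements whose modulus is the same positive element. From this we build the inverse explicitly.

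\textbf{(ii) first.} Let $x \perp y$. For complex Banach lattices one has $\abs{x+\alpha y} = \abs x + \abs y$ for every $\alpha \in \bbT$ exactly when $x \perp y$; more precisely, $x\perp y$ iff $\abs{x + y} = \abs{x - y}$ for real elements, and in the complex case iff $\abs{x+\alpha y}$ is independent of $\alpha\in\bbT$ and equals $\abs x+\abs y$. Applying this to $Ux,Uy$ gives
\begin{align*}
\abs{Ux+\alpha Uy} = \abs{U(x+\alpha y)} = \abs{x+\alpha y} = \abs x + \abs y = \abs{Ux}+\abs{Uy}
\end{align*}
for all $\alpha\in\bbT$. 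The characterisation then yields $Ux\perp Uy$; for this direction I would use $\abs{a}\vee\abs{b} \le$ the average over $\alpha$, or simply $\abs{a}\wedge\abs{b}=\tfrac12(\abs a+\abs b-\abs{\abs a-\abs b})$, combined with the reverse triangle inequality at the appropriate phase. I expect this lattice-theoretic characterisation of disjointness via moduli in complex Banach lattices to be the main technical point. I would first try the real-case formula $\abs{a+b}\wedge\abs{a-b}$ and the Krivine functional calculus, reducing to $C(K)$-type pointwise computations on the principal ideal generated by $\abs x+\abs y$.

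\textbf{(i).} Isometry is immediate from $\abs{Ux}=\abs x$ and the lattice norm, so $U$ is injective with closed range. For surjectivity and the inverse formula, define $V x \coloneqq \overline{U\overline x}$, which is linear and again a torsion operator. I then want $UVx=x$ and $VUx=x$. Since $\abs{Ux}=\abs x$, each $Ux$ lies in the band generated by $\abs x$, and $U$ acts there as multiplication by a unimodular "phase". Concretely, for $u\ge0$ write $Uu = w$ with $\abs w = u$; then $\overline w$ has modulus $u$. On the principal ideal $E_u\cong C(K)$ (Kakutani), (ii) shows that $U$ maps $E_u$ into $E_u$ and is given by multiplication by a unimodular function $h$, because a disjointness preserving operator with $\abs{Uf}=\abs f$ is a weighted composition whose composition part must be the identity. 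Then $V$ acts as multiplication by $\overline h$, so $UV=VU=I$ on $E_u$. Since $u$ is arbitrary, $U^{-1}=V$.

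\textbf{(iii).} Since $U$ maps each principal ideal $E_{\abs x}$ into itself and commutes with $V=U^{-1}$, it leaves every band $B$ invariant: for $x\in B$, $\abs{Ux}=\abs x$ gives $Ux\in B$. The same holds for $U^{-1}$, and $B^\perp$ is likewise invariant. Writing $x = Px + (I-P)x$ then gives $UPx\in B$ and $U(I-P)x\in B^\perp$, hence $PUx = UPx$.
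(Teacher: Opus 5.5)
Your proof is correct. For (i) and (iii) it has the same skeleton as the paper's: pass to the principal ideal $E_u \cong C(K)$, show that $U$ acts there as multiplication by the unimodular function $h = U\one$, so that $x \mapsto \overline{U\overline x}$ acts as multiplication by $\overline h$, and obtain band invariance from solidity. Where you differ is in how you reach the key facts, and in both places your route is heavier than necessary.

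For (ii), you do not need to characterise disjointness through $\abs{x+\alpha y}$. Since $x \perp y$ means $\abs x \wedge \abs y = 0$ by definition, you get $\abs{Ux} \wedge \abs{Uy} = \abs x \wedge \abs y = 0$ at once. What you flag as the main technical point is therefore a one-liner. Your characterisation is true, via the pointwise computation you sketch, but it adds nothing here.

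For the multiplication representation in (i), you invoke the theorem that disjointness-preserving operators on $C(K)$ are weighted composition operators, then use Urysohn to force the composition map to be the identity. This works, since the restriction of $U$ is isometric for the gauge norm and so the theorem applies. The paper gets the same conclusion self-containedly: apply the modulus identity to $f - f(t)\one$ and evaluate at $t$, which gives $\abs{(Uf)(t) - f(t)(U\one)(t)} = 0$. Done this way, (i) does not depend on (ii) at all. Note also that $U(E_u) \subseteq E_u$ follows directly from $\abs{Ux} = \abs x \le cu$, not from (ii) as you state.

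In (iii), your remarks about $U^{-1}$ and about commuting with $V$ are superfluous. Invariance of $B$ and $B^\perp$ under $U$ alone already gives $PU = UP$, exactly as in the paper.
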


\begin{proof}
(i): Clearly, $U$ is an isometry. For $0 < u \in E$, the principal ideal $E_u$ is invariant under $U$ and $\overline U$. With its gauge norm and order unit $u$, this ideal can be identified with a space $C(K)$ (see \cite[Section~II.7]{Schaefer1974}). Under this identification, $U$ is multiplication by the function $a \coloneqq U\one$: indeed, for $f \in C(K)$ and $t \in K$, the modulus identity applied to $f - f(t)\one$ gives $(Uf)(t) = a(t)f(t)$. Moreover, $\abs a = \one$, so $\overline U$ is multiplication by $\overline a$ and $U\overline U = \overline U U = I$ on $E_u$. Since every vector belongs to a principal ideal, these identities hold on $E$. Thus, $U^{-1} = \overline U$, and its inverse is again a torsion operator.

(ii): This follows immediately from $\abs{Ux} = \abs x$ and $\abs{Uy} = \abs y$.

(iii): The modulus identity implies that $U$ leaves every band invariant. If $P$ is a band projection, both $PE$ and $(I - P)E$ are therefore invariant under $U$, so $PU = UP$.
\end{proof}

\begin{proposition} \label{proposition:torsion-operator-construction}
    Let $E$ be a complex Dedekind complete Banach lattice. For $z \in E$, there exists a torsion operator $U$ such that $U \abs z = z$.
\end{proposition}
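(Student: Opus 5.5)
The natural candidate is to take the signum operator $S_z$ on the band $\{\abs z\}^{\perp\perp}$ and the identity on its complement. Let $P$ be the band projection onto $\{\abs z\}^{\perp\perp}$; it exists because $E$ is Dedekind complete and every band there is a projection band. We then define
\begin{align*}
    U \coloneqq S_z + (I - P) = S_z P + (I - P),
\end{align*}
where the second equality uses Lemma~\ref{lemma:signum-properties}(i). The case $z = 0$ is trivial: there $P = 0$ and $U = I$. So we may assume $z \neq 0$.

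The identity $U\abs z = z$ follows directly. Since $\abs z$ lies in the band, $(I-P)\abs z = 0$. Lemma~\ref{lemma:signum-properties}(iii) then gives $U\abs z = S_z\abs z = z$.

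The main point is to verify the torsion property $\abs{Ux} = \abs x$ for every $x \in E$. Write $x = Px + (I-P)x$, so that $Ux = S_z Px + (I-P)x$. We will show that $S_z Px$ lies in the band $\{\abs z\}^{\perp\perp}$. By Proposition~\ref{proposition:signum-operator}(ii), $\abs{S_z Px} \leq \abs{Px}$, and bands are ideals, so this follows. The component $(I-P)x$ lies in the disjoint band $\{\abs z\}^{\perp}$. Hence the two summands are disjoint, and the modulus of a sum of disjoint elements is the sum of their moduli. This gives
\begin{align*}
    \abs{Ux} = \abs{S_z Px} + \abs{(I-P)x}.
\end{align*}
Lemma~\ref{lemma:signum-properties}(ii), applied to $Px \in \{\abs z\}^{\perp\perp}$, yields $\abs{S_z Px} = \abs{Px}$. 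Therefore
\begin{align*}
    \abs{Ux} = \abs{Px} + \abs{(I-P)x} = \abs x,
\end{align*}
where the last equality uses disjointness once more. Boundedness and linearity of $U$ are clear, since $S_z$ and $P$ are bounded.

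The only step needing care is the disjoint-additivity of the modulus in a complex Banach lattice, namely $\abs{a+b} = \abs a + \abs b$ for $a \perp b$. I would justify it from the definition $\abs{w} = \sup_\theta \Re(e^{\ui\theta} w)$ together with the corresponding real identity for disjoint elements. Alternatively, one can apply the band projections $P$ and $I-P$ to $\abs{a+b}$ and use that band projections commute with the modulus. Everything else is an immediate appeal to Lemma~\ref{lemma:signum-properties} and Proposition~\ref{proposition:signum-operator}.
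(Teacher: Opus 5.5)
Your proof is correct and follows the paper's argument: the same operator $U = S_z + (I-P)$, the same observation that $S_zPx$ and $(I-P)x$ are disjoint, and the same appeals to Lemma~\ref{lemma:signum-properties}(ii) and (iii). The differences are only cosmetic. You place $S_zPx$ in the band using the inequality $\abs{S_zPx} \leq \abs{Px}$ rather than the equality, you treat $z=0$ separately, and you justify the disjoint additivity of the modulus, which the paper uses without comment.
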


\begin{proof}
Denote by $S_z$ the signum operator associated with $z$ and by $P$ the band projection onto $\set{\abs z}^{\perp\perp}$. We consider the operator
\begin{align*}
    U \colon E \to E, \qquad U \coloneqq S_z + I-P,
\end{align*}
and show that it has the desired properties.

First, let $x\in E$. By Lemma~\ref{lemma:signum-properties}, one has $S_z = S_zP$ and $\abs{S_z P x} = \abs{Px}$, so $S_z P x\in \set{\abs z}^{\perp\perp}$. Thus, the vectors $S_z P x$ and $(I - P)x$ are disjoint. Hence,
\begin{align*}
    \abs{Ux} = \abs{S_zPx+(I-P)x} = \abs{S_zPx}+\abs{(I-P)x} = \abs{Px}+\abs{(I-P)x} = \abs x.
\end{align*}
Thus, $U$ is a torsion operator. Moreover, since $P \abs z = \abs z$, it follows from Lemma~\ref{lemma:signum-properties} that $U \abs z = S_z \abs z = z$.
\end{proof}

\section{The \texorpdfstring{$(G)$}{(G)}-solvable case} \label{section:g-solvable-case}

We have seen in the proof of the main theorem that the domination result from Section~\ref{section:peripheral-spectral-inheritance-under-positive-domination} together with the approximation scheme carried out in Section~\ref{section:critical-minorants-and-cyclicity} implies the cyclicity conjecture. This raises one rather natural question. What happens if, instead of our domination result, one employs the weaker one of Räbiger and Wolff (cf.~Remark~\ref{remark:rabiger-wolff})? Before we answer this question, we prove the following technical lemma.

\begin{lemma} \label{lemma:criticality-under-growth}
    Let $T$ be a positive $(G)$-solvable operator on a complex Banach lattice $E$ such that $\mathrm{r}(T)=1$, and let $0 \leq S \leq T$. Suppose that there exist $0 \neq u \in E_+$ and $c \geq 0$ such that
    $Tu \geq u$ and
    \begin{align*}
        (T - S)u \leq c(Tu - u).
    \end{align*}
    Then one has $\mathrm{r}(S)=1$.
\end{lemma}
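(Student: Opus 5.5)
The plan is to reduce the lemma to Theorem~\ref{theorem:peripheral-domination} by moving along the segment from $S$ to $T$. The idea is to find a point $Q$ on this segment that visibly has spectral radius $1$, and then transport the peripheral spectral value $1$ back to $S$. Notably, this route does not seem to need the $(G)$-solvability hypothesis at all, only $0 \le S \le T$, $\mathrm{r}(T)=1$ and the two inequalities for $u$.

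\emph{Step 1 (choice of the intermediate operator).} Put $t \coloneqq 1/\max\{c,1\} \in (0,1]$ and $Q \coloneqq (1-t)T + tS = S + (1-t)(T-S)$. Then $0 \le S \le Q \le T$. Using the hypothesis $(T-S)u \le c(Tu-u)$ and $tc \le 1$, I would estimate
\begin{align*}
    Qu = Tu - t(T-S)u \ge Tu - tc(Tu - u) = (1-tc)\,Tu + tc\,u \ge u,
\end{align*}
where the last inequality uses $Tu \ge u$ and $1 - tc \ge 0$.

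\emph{Step 2 (criticality of $Q$).} Since $Q \ge 0$ and $Qu \ge u$, induction gives $Q^m u \ge u$ for all $m \in \N$. Hence $\norm{Q^m}\,\norm u \ge \norm u$ with $u \ne 0$, so $\mathrm{r}(Q) \ge 1$ by Gelfand's formula. The reverse inequality $\mathrm{r}(Q) \le \mathrm{r}(T) = 1$ follows from $0 \le Q \le T$. As $Q$ is positive with $\mathrm{r}(Q)=1$, one gets $1 \in \sigma(Q)$ by \cite[Proposition~V.4.1]{Schaefer1974}.

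\emph{Step 3 (transfer to $S$).} If $t = 1$, then $Q = S$ and we are done. Otherwise $\mu \coloneqq 1-t \in [0,1) \subseteq \bbD$, and Theorem~\ref{theorem:peripheral-domination} gives
\begin{align*}
    \sigma(S+\mu(T-S)) \cap \bbT = \sigma(S) \cap \bbT.
\end{align*}
Since $S + \mu(T-S) = Q$ and $1 \in \sigma(Q)$, this yields $1 \in \sigma(S)$, so $\mathrm{r}(S) \ge 1$. Together with $\mathrm{r}(S) \le \mathrm{r}(T) = 1$, this proves $\mathrm{r}(S) = 1$.

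The only delicate step is Step~1, namely choosing the convex weight so that the defect $(T-S)u$ is absorbed by the excess $Tu - u$. Everything else is routine. If the intention of this appendix is to avoid Theorem~\ref{theorem:peripheral-domination} and use only the Räbiger--Wolff version, Step~3 needs a replacement. I would instead apply \cite[Corollary~1.5]{Rabiger1997} to the pair $Q \le T$, where $T$ is $(G)$-solvable; this is where that hypothesis would enter. I would then need a separate argument passing from $Q$ to $S$, for instance via resolvent estimates $(\lambda-1)R(\lambda,T)u \ge u$ for $\lambda > 1$, combined with the growth condition on the ideal quotients. I expect that replacement step to be the genuine obstacle.
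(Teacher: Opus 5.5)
Your proof is correct. With $t=1/\max\{c,1\}$ you do get $Qu\ge u$, hence $\mathrm{r}(Q)=1$ and $1\in\sigma(Q)$. Theorem~\ref{theorem:peripheral-domination} with $\mu=1-t$ then carries $1$ along the segment down to $S$. This is the same trick the paper uses in Proposition~\ref{proposition:asymptotically-rotationally-self-similar-minorants} to get $\mathrm{r}(S_n)=1$. The paper's proof, however, deliberately avoids Theorem~\ref{theorem:peripheral-domination}, and that is the only reason the lemma exists. Appendix~\ref{section:g-solvable-case} re-derives Lotz's theorem with both uses of that theorem replaced by $(G)$-based arguments and \cite[Corollary~1.5]{Rabiger1997}. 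The lemma (with $c=n/2$) is precisely the replacement for the step $\mathrm{r}(S_n)=1$. So your argument shows that the $(G)$-hypothesis is redundant once Vesentini's theorem is available. Inserted into the appendix, though, it would make the ``alternative'' proof of Lotz's theorem depend on Vesentini again. The paper instead uses $(G)$ directly. It sets $p(x)=\limsup_{t\downarrow1}(t-1)\norm{R(t,T)\abs x}$ and takes the closed ideal $J=\ker p$, which is invariant under $T$ and hence under $S$. Then $Tu\ge u$ gives $p(u)\ge\norm u>0$, while $(t-1)R(t,T)(Tu-u)=(t-1)^2R(t,T)u-(t-1)u\to0$ gives $Tu-u\in J$. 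The bound $0\le(T-S)u\le c(Tu-u)$ puts $(T-S)u$ in $J$, so $u+J$ is a nonzero fixed vector of the operator induced by $S$ on $E/J$, and $\mathrm{r}(S)\ge1$. The $(G)$-solvable case is handled on $I_j/I_{j-1}$ for the least $j$ with $u\in I_j$. No intermediate $Q$ is needed. Your suggested fallback would not have worked. \cite[Corollary~1.5]{Rabiger1997} transfers peripheral spectrum from the smaller operator to the larger one, so applied to $Q\le T$ it only returns $1\in\sigma(T)$. Applying it to $S\le Q$ would presuppose $\mathrm{r}(S)=\mathrm{r}(Q)$, which is what you are trying to prove.
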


\begin{proof}
    First suppose that $T$ satisfies $(G)$. Following the proof of \cite[Theorem~1.4]{Rabiger1997}, consider the continuous lattice seminorm
    \begin{align*}
        p(x) \coloneqq \limsup_{t \downarrow 1}(t - 1)\norm{R(t, T) \abs x}
    \end{align*}
    and the closed ideal $J \coloneqq \ker p$. Since $p(Tx) \leq \norm T p(x)$, the ideal $J$ is $T$-invariant. Moreover, $0 \leq S \leq T$ implies that $J$ is also $S$-invariant.

    Since $Tu \geq u$, one has $(t-1)R(t,T)u \geq u$ for every $t>1$, and hence $p(u) \geq \norm u > 0$. On the other hand, the resolvent identity and $(G)$ yield
    \begin{align*}
        (t - 1)R(t, T)(Tu - u) &= (t - 1)^2 R(t, T)u - (t - 1) u \to 0
    \end{align*}
    as $t \downarrow 1$. Thus $Tu - u \in J$. The assumption $0 \leq (T - S)u \leq c(Tu - u)$ now implies that $(T - S) u \in J$, so $Su - u = Tu - u - (T - S)u \in J$.

    Consequently, the operator $\widetilde S$ induced by $S$ on $E/J$ satisfies $\widetilde S(u + J) = u + J \neq 0$. Therefore,
    \begin{align*}
        1 \leq \mathrm{r}(\widetilde S) \leq \mathrm{r}(S) \leq \mathrm{r}(T) = 1.
    \end{align*}

    Since $T$ is $(G)$-solvable, there exist ideals $\set{0} = I_0 \subseteq I_1 \subseteq \cdots \subseteq I_m = E$. Denote by $T_j$ and $S_j$ the operators induced by $T$ and $S$ on $I_j/I_{j-1}$.

    Choose the least $j \in \set{0, \ldots, m}$ such that $u \in I_j$ and set
    $u_j \coloneqq u + I_{j - 1}$. Then $u_j \neq 0$,
    $T_j u_j \geq u_j$, and
    \begin{align*}
        0 \leq (T_j - S_j) u_j
        \leq c(T_j u_j - u_j).
    \end{align*}
    Moreover, $T_j u_j \geq u_j \neq 0$ implies that $\mathrm{r}(T_j) \geq 1$, while $\mathrm{r}(T_j) \leq \mathrm{r}(T) = 1$. Since $T_j$ satisfies $(G)$, the first part of the proof yields $\mathrm{r}(S_j)=1$. Hence, one obtains $1 = \mathrm{r}(S_j) \leq \mathrm{r}(S) \leq 1$.
\end{proof}

We can recover the following classical theorem of Lotz (see \cite[Theorem~4.9]{Lotz1968}) using the theorem of Räbiger and Wolff.

\begin{theorem}[Lotz] \label{theorem:lotz}
    Let $T$ be a positive $(G)$-solvable operator on a complex Banach
    lattice $E$ such that $\mathrm{r}(T)=1$. Then
    $\lambda^k \in \sigma(T)$ for all
    $\lambda \in \sigma_{\mathrm{per}}(T)$ and $k \in \Z$.
\end{theorem}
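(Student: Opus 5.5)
We prove Theorem~\ref{theorem:lotz} by repeating the proof of Theorem~\ref{theorem:cyclicity}, with one change. Wherever Theorem~\ref{theorem:peripheral-domination} was used, we use Lemma~\ref{lemma:criticality-under-growth} and the Räbiger--Wolff inclusion (Remark~\ref{remark:rabiger-wolff}). For this to work, $(G)$-solvability must survive the passage to $\widehat T = (T^\calU)''$.

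\textbf{Step 1: reduction to the bidual of the ultrapower.} Fix $\lambda \in \sigma_{\mathrm{per}}(T)$ and set $\widehat E = (E^\calU)''$, $\widehat T=(T^\calU)''$. Let $\set{0}=I_0\subseteq\dots\subseteq I_m=E$ be the chain of ideals witnessing $(G)$-solvability. We check that $I_k^\calU$ and then $(I_k^\calU)^{\perp\perp}$, taken in the bidual, form a chain of $\widehat T$-invariant closed ideals. The quotients should be, up to isometric lattice isomorphism, the corresponding ultrapowers and biduals of $I_k/I_{k-1}$. The condition $(G)$ passes to these quotients because resolvents commute with both constructions: $R(t,T_k)^\calU = R(t,T_k^\calU)$, the bidual resolvent is the bidual of the resolvent, and norms are preserved. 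I expect this bookkeeping to be the main obstacle. Identifying a quotient of a bidual with the bidual of a quotient, and an ultrapower of a quotient with a quotient of ultrapowers, needs care; if it fails, I would redo Step 1 using $\sigma_{\mathrm{app}}$ directly. As in the proof of Theorem~\ref{theorem:cyclicity}, $\lambda\in\sigma_p(\widehat T)$ and $\sigma(\widehat T)=\sigma(T)$.

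\textbf{Step 2: the minorants are critical.} Take the eigenvector $z$, $u=\abs z$, the torsion operator $U$, and the iterates $S_n=\Phi^n(\widehat T)$ from Proposition~\ref{proposition:asymptotically-rotationally-self-similar-minorants}. We show $\mathrm{r}(S_n)=1$ using Lemma~\ref{lemma:criticality-under-growth} instead of Theorem~\ref{theorem:peripheral-domination}. We have $\widehat Tu\ge u$ and $S_1u=(\widehat Tu+u)/2$, so $(\widehat T-S_1)u=\tfrac12(\widehat Tu-u)$. The telescoping estimate from that proof gives
\begin{align*}
    (\widehat T-S_n)u\le n(\widehat T-S_1)u=\tfrac n2(\widehat Tu-u).
\end{align*}
Lemma~\ref{lemma:criticality-under-growth} with $c=n/2$ then yields $\mathrm{r}(S_n)=1$. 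The asymptotic self-similarity estimate carries over unchanged.

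\textbf{Step 3: passing to the ultrapower and concluding.} Form a second ultrapower and the operator $S=(S_n)_\calU$. As in Proposition~\ref{proposition:rotational-self-similarity-in-ultrapowers}, $S$ is a critical minorant of $\widehat T^{\calU}$ with $SU^\calU=\lambda U^\calU S$, so $\lambda^k\in\sigma(S)$ for all $k\in\Z$. To transfer these points to $\widehat T^{\calU}$, we replace Theorem~\ref{theorem:peripheral-domination} by the Räbiger--Wolff inclusion. This needs $\widehat T^{\calU}$ to be $(G)$-solvable, which follows by applying the ultrapower part of Step 1 once more. Finally $\sigma(\widehat T^\calU)=\sigma(T)$, which gives $\lambda^k\in\sigma(T)$ for all $k\in\Z$.
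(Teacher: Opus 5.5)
Your proposal is correct and matches the paper's proof sketch. You rerun the proof of Theorem~\ref{theorem:cyclicity}, get $\mathrm{r}(S_n)=1$ from Lemma~\ref{lemma:criticality-under-growth} with $c=n/2$ via $(\widehat T-S_n)u\le n(\widehat T-S_1)u$, and use the R\"abiger--Wolff inclusion for the final step, with the chains $I_k^{\calU}$ and $(I_k^{\calU})^{\perp\perp}$ correctly justifying what the paper only asserts: that $(G)$-solvability survives both ultrapowers and the bidual.
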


\begin{proof}[Proof sketch]
    Repeat the proof of Theorem~\ref{theorem:cyclicity}, observing that $(G)$-solvability is preserved under ultrapowers and passage to the bidual. The only changes concern the two applications of Theorem~\ref{theorem:peripheral-domination}: in the construction of the asymptotically rotationally self-similar minorants, Lemma~\ref{lemma:criticality-under-growth} yields $\mathrm{r}(S_n)=1$, while the final spectral inclusion follows from \cite[Corollary~1.5]{Rabiger1997}. All remaining steps can be carried out unchanged.
\end{proof}

\subsection*{Acknowledgements} The author thanks Rainer Nagel and Noa Bihlmaier for detailed feedback on an early draft version of the paper and Jochen Glück for reading an earlier draft version of the article.

\bibliographystyle{plain}
\bibliography{literature}

\end{document}